%
%
%
%
%
\RequirePackage{fix-cm}
\documentclass[smallextended]{svjour3}       
\usepackage[utf8]{inputenc}

\usepackage{tabularx,ragged2e,booktabs,caption}
\newcolumntype{C}[1]{>{\Centering}m{#1}}

\smartqed  
\usepackage{xspace}
\usepackage{amsfonts}
\usepackage{amsmath}
\usepackage{enumerate}
\usepackage{framed}
\usepackage{graphicx}
\usepackage{lscape}
\usepackage{bm}
\usepackage{color}
\usepackage{multicol}
\usepackage{cite}
\usepackage[utf8]{inputenc}

\usepackage{tabularx,ragged2e,booktabs,caption}
\newcolumntype{C}[1]{>{\Centering}m{#1}}

\usepackage{amsmath}
\usepackage{amsfonts}
\usepackage{amssymb}
\usepackage{tikz}
\usepackage{subfig}
\usepackage{float}
\usepackage{graphicx}
\usepackage[english]{babel}
\usepackage{geometry}                
\geometry{a4paper}                   
\usepackage[parfill]{parskip}    
\usepackage{epstopdf}
\usepackage{psfrag}

\DeclareGraphicsRule{.tif}{png}{.png}{`convert #1 `dirname
	#1`/`basename #1 .tif`.png}
\textwidth 6.6in \textheight 9.175in \topmargin 0in \headheight 40pt
\oddsidemargin 0in
\oddsidemargin 0in \evensidemargin 0in
\parskip 0.5\baselineskip
\parindent 0pt

\bibliographystyle{plainnat}


\renewcommand{\eqref}[1]{{(\ref{#1})}}







\newtheorem{rema}{Remark}
\usepackage{graphicx}
%
%
%
%
%
\begin{document}

\title{A novel high dimensional fitted scheme for stochastic optimal control problems}
%


\titlerunning{A novel high dimensional fitted scheme for stochastic optimal control problems}        

\author{ Christelle Dleuna Nyoumbi,
Antoine Tambue
}
\institute{Antoine Tambue (Corresponding author) \at
	Western Norway University of Applied Sciences,  Inndalsveien 28, 5063  Bergen, Norway,\\
	Tel.: +47 55 58 70 06, \email{antonio@aims.ac.za, antoine.tambue@hvl.no, tambuea@gmail.com}\\
   \and
    Christelle Dleuna Nyoumbi, \at
            Institut de Math\'{e}matiques et de Sciences Physiques de l'Universit\'{e} d'Abomey Calavi \\
              BP 613, Porto-Novo, B\'{e}nin
           \email{ christelle.dleuna@imsp-uac.org } 
  }
\date{Received: date / Accepted: date}

\maketitle

\begin{abstract}
Stochastic optimal principle leads to the resolution of a partial differential equation (PDE), namely the Hamilton-Jacobi-Bellman (HJB) equation. In general, this equation cannot be solved analytically, thus numerical 
algorithms are the only tools to provide accurate approximations. The aims of this paper is to introduce a novel fitted finite volume method  to solve  high  dimensional   degenerated HJB equation from stochastic optimal control problems in high dimension ($ n\geq 3$). 
The challenge  here is due to the nature of  our HJB equation which  is a degenerated second-order partial differential equation coupled with an optimization problem. 
\textcolor{blue}{ For such problems,  standard scheme such as finite difference method  losses its monotonicity and therefore the convergence toward the viscosity solution  may not be guarantee.}
 \textcolor{blue}{ We discretize the HJB equation  using the fitted finite volume  method, well known to tackle degenerated PDEs, while  the time discretisation is  performed  using the Implicit
	Euler  scheme.}.
We show that matrices resulting \textcolor{blue}{from spatial discretization   and  temporal  discretization  are  M--matrices}. Numerical results in finance demonstrating the accuracy  of the proposed numerical method comparing to the standard finite difference method are provided.
\end{abstract}

\keywords{
Stochastic optimal control;  dynamic programming;  HJB Equations; finite volume method; computational finance; degenerate parabolic equations.\\ 
\textbf{Mathematics Subject Classification:} 65M75,\;
}
	
\section{Introduction}
The theory of optimal control for stochastic differential equations is mathematically challenging and
 it has been considered in many fields such as economics, engineering, biology and finance \cite{WH2,HP}. 
 Stochastic optimal control problems have been studied by many researches \cite {KL,JER,HPFH}. 
 In some cases, the well posedness of such problems have been studied using methods such as viscosity  and  minimax techniques (see \cite{Cra1,Cra2,Cra5}). In general, most of them do not have an
 explicit solution,  therefore there have been many attempts to develop novel methods for their approximations.
 Numerical approximation of stochastic optimal control problem is therefore an active research area and has attracted a lot of attentions \cite{Cra7,KNV1,KL1,KL,JER,HPFH}. 
 The keys challenge for solving HJB equation are the low regularity of the solution  and the lack of appropriate  numerical methods to tackle the degeneracy of the differential operator in HJB equation.
  Indeed adding to the standard issue that we usually have when solving degenerated PDE, we need to couple with an optimization problem at each point of the grid and for each time step.
 A standard approach is based on  Markov chain approximation, which  suffers from time step limitations due to stability issues \cite{Peter} as the method is indeed based on finite difference approach.
 Many stochastic optimal control problems such as Merton optimal problems  have degenerated linear operator when the spatial variables approach the region near to zero.
 This degeneracy has an adverse impact on the accuracy when the finite difference method is used to solve such optimal problems \cite{15, wilmott2005best} as the monotonicity of the scheme is usually lost. 
 However, when solving  HJB equation,  the monotonicity \textcolor{blue}{also} plays a key role to ensure the convergence of the numerical scheme toward the viscosity solution.
Indeed in high dimensional  Merton's control problem,  the matrix in the diffusion part is  lower rank  near the origin and it has been found in \cite{chistoph2019, chistoph2020} 
that the standard finite difference schemes become non monotone and may not converge to the viscosity solution of the HJB.
 To solve the degeneracy issue, a fitted finite volume have been proposed in  \cite{15} for one and two dimensional optimal control  problems.
 This method uses special technique called fitted technique to tackle the degeneracy. The scheme  have been initially  developed to solve Black Scholes PDEs for options  pricing (see \cite{WS}
 and references therein).
 In  \cite{15}, numerical experiments  have  been used  to demonstrate that the  fitted finite volume  scheme   is  more accurate  than the standard  finite difference approach  to approximate  one and two dimensional stochastic optimal problems. 
 To the best of our knowledge, even for Black Scholes PDEs for options  pricing,  fitted  technique for high dimensional domain ($n\geq 3$)  has be lacked in the literature.
 
 The aim of this research is to introduce the first fitted finite volume method  for stochastic optimal control problems in high dimensional domain ($n\geq 3$).
 This method is suitable to handle the degeneracy of the linear operator while solving numerically the HJB equation. 
 The method is coupled with implicit time-stepping method and the iterative method presented in \cite{HPFH} for optimization problem at every time step.  
 The merit of the method is that it is absolutely stable in time because of the implicit nature of the time discretisation and  yields a linear system with a positive-definite $M $-matrix,  this is  in contrast  of  the  standard  finite difference scheme.\\
 
 \textcolor{blue}{The novel contribution of our paper over the existing literature can be summarized as
 	\begin{itemize}
 		\item  We have upgraded  the fitted finite volume  technique  to  discretize  a more generalized HJB equation  coupled  with  the  implicit time-stepping method
 		for temporal discretization  method and the iterative method for  the optimization problem at every time step.  To best of our knowledge  such combination  has not yet proposed so far to solve  stochastic optimal control problems in high dimensional domain ($n\geq 3$).
 		\item We have  proved that the corresponding matrices after spatial  and temporal discretization  are  positive-definite $M $--matrices.
 		We have demonstrated  by numerical experiments that the proposed scheme   can be  more accurate than the standard  finite difference scheme.
 	\end{itemize}}
 
 The rest of the paper is organized as follows. The stochastic optimal control problems is introduced in  section\ref{sec1}. In section \ref{sec2}, we introduce the fitted finite volume in  high dimensional domain and show that the system matrix of the resulting discrete equations is an $M $-matrix. Section \ref{sec3} provides temporal discretization and optimization algorithm  for  spatial diiscretized HJB equation.
 In section \ref{sec4},  we present some numerical examples illustrating  the  accuracy of the proposed method comparing to the standard finite difference. Finally, in section \ref{sec6}, we summarise our finding.
\section{Preliminaries and formulation}
 \label{sec1}
 Let $\left( \Omega, \mathcal{F}, \mathbb{F}=(\mathcal{F}_t)_{t \geq 0}, \mathbb{P}\right)$  be a filtrated probability space. 
We consider the numerical approximation of the following  controlled Stochastic Differential Equation (SDE) defined  in $\mathbb{R}^{ n}$    by
 \begin{equation} \label{premier}
 \begin{split}
 & dx_s = b(s,x_s, \alpha_s) dt + \sigma (s, x_s, \alpha_s) d\omega_s,\,\,\,s\,\in (t, T]\\
 & x_t=x
 \end{split}
 \end{equation}
 where  
 \begin{equation}
 \begin{split}
 b&: [0, T] \times \mathbb{R}^n \times \mathcal{A} \rightarrow  \mathbb{R}^n \\ 
 & (t,x_t, \alpha_t) \rightarrow b(t,x_t,\alpha_t)
 \end{split}
 \end{equation}
 is the drift term  and
 \begin{equation}
 \begin{split}
 \sigma &:[0, T] \times \mathbb{R}^n\times \mathcal{A} \rightarrow  \mathbb{R}^{n\times d} \\
 & (t,x_t,\alpha_t)  \rightarrow \sigma(t,x_t,\alpha_t)
 \end{split}
 \end{equation} 
 the $ d $-dimensional diffusion coefficients. Note that $ \omega_t $ are $ d $-dimensional independent Brownian motion on $ \left(\Omega, \mathcal{F}, (\mathcal{F}_t)_{t \geq 0}, \mathbb{P} \right) $, the control $ \alpha =(\alpha_t)_{t \geq 0}$ is an $ \mathbb{F} $-adapted process, valued in  $ \mathcal{A} $ compact convex subset of $ \mathbb{R}^m \, (m \geq 1)$ and satisfying some integrability conditions and/or state constraints. Precise assumptions on $ b $ and $ \sigma $  to ensure the existence of the unique solution $
 x_t $ of (\ref{premier}) can be found in \cite{HP}.\\
 Given a function $ f $ from  $[0, T] \times \mathbb{R}^n\times \mathcal{A} $ into  $ \mathbb{R}$ and $ g $ from  $\mathbb{R}^n $ into  $ \mathbb{R}$, the performance functional  is defined as 
 \begin{equation}
 J(t, x, \alpha) =  \mathbb{E}\, \left[ \int_t^T f(s, x_s,\alpha) \,ds + g ( x_T) \right], \,\,\,\forall\,x\,\in\,\mathbb{R}^n.
 \end{equation} 
We assume that 
 \begin{equation}
 \mathbb{E}\, \left\lbrace\left[ \int_t^T f(s, x_s,\alpha) \,ds + g (x_T) \right]\right\rbrace < \infty.
 \end{equation} 
 The model problem consists to solve the following optimization   
 \begin{equation} \label{vienu}
 v(t, x) = \underset{\alpha \in \mathcal{A}}{\sup}\, J(t, x, \alpha), \,\,\,\,\forall\,x\,\in\,\mathbb{R}^n. 
 \end{equation} 
 By dynamic programming, the resulting Hamilton Jacobi-Bellamn (HJB) equation \cite{KL}) is given by
 \begin{equation}\label{merci}
 \begin{cases}
 \dfrac{ \partial v(t, x)}{ \partial  t} + \underset{\alpha \in \mathcal{A}}{\sup} \left[L^{\alpha} v(t, x) + f(t, x,\alpha)\right] = 0 \quad\text{on} \  [0,T)\times \mathbb{R}^n\\
 v(T, x) = g(x), \,\,\,\,x \,\in \mathbb{R}^n
 \end{cases}
 \end{equation}
 where 
 \begin{equation}\label{tous}
 L^{\alpha} v(t, x) = \sum_{i=1}^n (b(t, x,\alpha))_i \dfrac{ \partial v(t,x)}{ \partial  x_i}  + \sum_{i,j=1}^n ( a^{\alpha} (t,x))_{i,j}\,\dfrac{ \partial^2 v(t,x)}{\partial x_i\, \partial x_j},
 \end{equation}   
 and $ a^\alpha (t,x) = \dfrac{1}{2}\bigg(\sigma (t,x,\alpha)(\sigma(t,x,\alpha))^T\bigg)  $.
The resulting Hamilton-Jacobi-Bellman equation is typically a second order nonlinear partial differential equation, which can degenerate and therefore should  to solve accurately. 

  \section{Fitted finite volume method in three dimension HJB} 
 \label{sec2}
As we have already mentioned, even for Black Scholes PDEs for options pricing,  fitted  technique for three dimensional space  has be lacked in the literature to the best of our knowledge.
 The goal here is to update the technique in  \cite{WS}
 to three dimensional  HJB equation.
 
 Consider the more generalized HJB equation \eqref{merci} in dimension $ 3 $  which can be written in
 the form by setting $ \tau = T-t $
 \begin{equation} \label{vie1}
 \begin{split}
 &-\dfrac{ \partial v(\tau,x,y,z) }{\partial \tau} \\
 &+ \sup_{\alpha \in \mathcal{A}}\left[ \nabla\cdot \left( k (v(\tau,x,y,z))\right) + c(\tau,x,y,z,\alpha)\,v(\tau,x,y,z) \right] = 0,
 \end{split}
 \end{equation}
 where $k(v(\tau,x,y,z)) = A(\tau,x,y,z,\alpha)\cdot\nabla v(\tau,x,y,z)+ b(\tau,x,y,z,\alpha)\,v(\tau,x,y,z)$\,  with 
 \begin{equation}
 \begin{split}
 \label{A}
 b = (x\,b_1, y\,b_2, z\,b_3)^T,\quad
 A=\left[ \begin{array}{ccc}
 a_{11} & a_{12} & a_{13} \\
 a_{21} & a_{22}& a_{23}  \\
 a_{31} & a_{32}& a_{33}
 \end{array} \right].
 \end{split}
 \end{equation}
 \textcolor{blue}{Indeed this  divergence form is not a restriction as the differentiation is respect to $x, y$  and $z$
 and not respect to the control  $\alpha$, which may be discontinuous in some applications. We will assume that $a_{21}= a_{12}, a_{31}= a_{13}$ and $  a_{32}= a_{23} $.  We also define the following coefficients, which  will help us to build our scheme 
$ a_{11}(t,x,y,\alpha) = \overline{a}_1(t,x, y, \alpha)\,x^2, a_{22}(t,x,y,\alpha) = \overline{a}_2(t,x,y,\alpha) y^2,\; a_{33}(t,x,y,\alpha) = \overline{a}_3(t,x,y,\alpha) z^2$, $  a_{1 2}=a_{2 1} = d_1(t,x,y,\alpha)\, x\,y\,z$,\,$  a_{1 3}=a_{3 1} = d_2(t,x,y,\alpha)\, x\,y\,z $ and  $ a_{2 3}=a_{3 2} = d_3(t,x,y,\alpha)\, x\,y\,z$.
Although this initial value problem \eqref{vie1} is defined on
the unbounded region $ \mathbb{R}^3 $, for computational reasons we often restrict  to a bounded region.}
As usual the three dimensional domain is truncated  to $ I_{x}= [0,x_{\text{max}}] $,  $I_{y}= [0,y_{\text{max}}] $ and $ I_{z}= [0,z_{\text{max}}] $. The truncated domain will  be divided into $ N_1 $, $ N_2 $ and $ N_3 $ sub-intervals
 $$  I_{x_{i}} :=(x_i, x_{i+1}),\,\, I_{y_{j}} :=(y_j, y_{j+1}) ,\,\, I_{z_{k}} :=(z_k, z_{k+1}),$$
 $ i=0\cdots N_1-1,\,\,j=0\cdots N_2-1,\,\,\,k=0\cdots N_3-1$
 with $ 0 = x_{0} < x_{1}< \cdots \cdots< x_{N_1} = x_{\text{max}},\,$ \, $0 = y_{0} < y_{1}< \cdots \cdots< y_{N_2}= y_{\text{max}} $ and  $ 0 = z_{0} < z_{1}< \cdots \cdots< z_{N_3}= z_{\text{max}} $. This defines on  $ I_{x} \times I_{y} \times I_{z} $  a rectangular  mesh.
 By setting 
 \begin{equation}
 \begin{split}
 & x_{i+1/2} :=\dfrac{x_{i} + x_{i+1} }{2},\,\, x_{i-1/2} :=\dfrac{x_{i} + x_{i-1} }{2},\,\,y_{j+1/2} :=\dfrac{y_{j} + y_{j+1} }{2},\\   &\,y_{j-1/2} :=\dfrac{y_{j} + y_{j-1} }{2}, \,\, \,z_{k+1/2} :=\dfrac{z_{k} + z_{k+1} }{2},\,\,z_{k-1/2} :=\dfrac{z_{k} + z_{k-1} }{2},
 \end{split}
 \end{equation}
 for each $ i=1\cdots N_1-1$\,\, $ j=1\cdots N_2-1$ \,and each $ k=1\cdots N_3-1$. These mid-points form a second partition of $ I_{x} \times I_{y} \times I_{z}$ if we define $ x_{-1/2} = x_{0}$,\, $ x_{N_1+1/2} = x_{\text{max}}$,\,\, $ y_{-1/2} = y_{0}$,\, $ y_{N_2+1/2} = y_{\text{max}}$ and  $ z_{-1/2} = z_{0}$,\, $ z_{N_3+1/2} = z_{\text{max}}$. For each $i = 0, 1, \cdots ,N_1 $,\,\, $j = 0, 1, \cdots ,N_2 $ and $k = 0, 1,\cdots,N_3 $, we set  $h_{x_i} = x_{i+1/2} - x_{i-1/2} $, $h_{y_j} = y_{j+1/2} - y_{j-1/2} $, \, $h_{z_k} = z_{k+1/2} - z_{k-1/2} $ and define the grids points as  \[
 \mathcal{G}=\left\lbrace  (x_i,y_j,z_k):~1\leq i\leq N_1-1;\,\,\,1\leq j\leq N_2-1; \,\,\,1\leq k\leq N_3-1\right\rbrace.
 \]
 Integrating both size of (\ref{vie1}) over $ \mathcal{R}_{i,j,k}=\left[ x_{i-1/2}, x_{i+1/2} \right] \times \left[  y_{j-1/2}, y_{j+1/2}\right]\times \left[  z_{k-1/2}, z_{j+1/2}\right] $ we have
 \begin{equation}
 \begin{split}
 & - \int_{x_{i-1/2}}^{x_{i+1/2}} \int_{y_{j-1/2}}^{y_{j+1/2}} \int_{z_{k-1/2}}^{z_{k+1/2}} \dfrac{ \partial v }{ \partial \tau}\, dx\, dy\, dz \\ 
 & + \int_{x_{i-1/2}}^{x_{i+1/2}} \int_{y_{j-1/2}}^{y_{j+1/2}} \int_{z_{k-1/2}}^{z_{k+1/2}} \sup_{\alpha \in \mathcal{A}} \left[ \nabla \cdot \left( k(v)\right) + c\,v \right]\, dx\,dy\,dz = 0,
 \end{split}
 \end{equation}
 for $ i =1,2,\cdots N_1-1 $,\, $ j =1,2,\cdots N_2-1 $,\, $k =1,2,\cdots N_3-1 $.\\
 Applying the mid-points quadrature rule to the first and the last point terms, we obtain the above
 \begin{equation}\label{bjr}
 \begin{split}
& -\dfrac{\partial \, v_{i,j,k}(\tau) }{\partial\, \tau}\,l_{i,j,k}\\
 & + \sup_{\alpha \in \mathcal{A}} \left[ \int_{ \mathcal{R}_{i,j,k}}\nabla \cdot \left( k(v)\right)\,dx\,dy\,dz + c_{i,j,k}(\tau,\alpha)\, v_{i,j,k}(\tau)\,l_{i,j,k}\right] =0
 \end{split}
 \end{equation}
 for $ i= 1,2,\cdots N_1-1 $,\, $ j= 1,2,\cdots N_2-1 $,\,$ k= 1,2,\cdots N_3-1 $ where $ l_{i,j,k} = \left( x_{i+1/2} - x_{i-1/2} \right) \times \left(y_{j+1/2} - y_{j-1/2}\right) \times \left(   z_{k+1/2} - z_{k-1/2}\right) $ is the volume of $ \mathcal{R}_{i,j,k} $. Note that $ v_{i,j,k}(\tau) $ denotes the nodal approximation to $ v(\tau, x_{i}, y_{j}, z_k) $ at each point of the grid.
  
 We now consider the approximation of the middle term in \eqref{bjr}. Let $\bf n $ denote the unit vector outward-normal to $ \partial \mathcal{R}_{i,j,k} $. By Ostrogradski Theorem, integrating by parts and using the definition
 of flux $ k $, we have
 \begin{equation}\label{vol}
 \begin{split}
& \int_{\mathcal{R}_{i,j,k}} \nabla \cdot \left( k(v)\right) d x\,dy\,dz \\
& = \int_{\partial \mathcal{R}_{i,j,k}} k \cdot \text{\textbf{n}} \, ds \\ 
 &= \int_{\left(x_{i+1/2},y_{j-1/2},z_{k-1/2} \right)}^{\left(x_{i+1/2}, y_{j+1/2}, z_{k+1/2} \right)}\left(a_{11}\,\dfrac{\partial  v }{d x}+ a_{12}\,\dfrac{d v }{\partial y}+a_{13}\,\dfrac{\partial v }{\partial  z}+ x\,b_1\,v \right)d y\,d z  \\ 
 &- \int_{\left(x_{i-1/2},y_{j-1/2},z_{k-1/2} \right)}^{\left(x_{i-1/2}, y_{j+1/2}, z_{k+1/2} \right)} \left(a_{11}\,\dfrac{\partial  v }{\partial x}+ a_{12}\,\dfrac{\partial  v }{\partial y}+a_{13}\,\dfrac{\partial  v }{\partial z}+ x\,b_1\,v \right)\,d y\,dz \\ 
 & +\int_{\left(x_{i-1/2},y_{j+1/2},z_{k-1/2} \right)}^{\left(x_{i+1/2}, y_{j+1/2}, z_{k+1/2} \right)} \left(a_{21}\,\dfrac{\partial  v }{ \partial x}+ a_{22}\,\dfrac{\partial  v }{d y}+a_{23}\,\dfrac{\partial  v }{\partial  z}+ y\,b_2\,v \right)d x\,dz \\ 
 &- \int_{\left(x_{i-1/2},y_{j-1/2},z_{k-1/2} \right)}^{\left(x_{i+1/2}, y_{j-1/2}, z_{k+1/2} \right)} \left(a_{21}\,\dfrac{\partial v }{\partial  x}+ a_{22}\,\dfrac{\partial v }{ \partial y}+a_{23}\,\dfrac{\partial  v }{\partial  z}+ y\,b_2\,v \right)d x\,dz \\ 
 &+ \int_{\left(x_{i-1/2},y_{j-1/2},z_{k+1/2} \right)}^{\left(x_{i+1/2}, y_{j+1/2}, z_{k+1/2} \right)} \left(a_{31}\,\dfrac{\partial v }{\partial x}+ a_{32}\,\dfrac{\partial v }{ \partial y}+a_{33}\,\dfrac{\partial  v }{ \partial z}+ z\,b_3\,v \right)d x\,dy \\ 
 & - \int_{\left(x_{i-1/2},y_{j-1/2},z_{k-1/2} \right)}^{\left(x_{i+1/2}, y_{j+1/2}, z_{k-1/2} \right)} \left(a_{31}\,\dfrac{\partial v }{\partial x}+ a_{32}\,\dfrac{\partial v }{ \partial y}+a_{33}\,\dfrac{\partial v }{\partial  z}+z\, b_3\,v \right)d x\,dy.
 \end{split}
 \end{equation}
 Note  that
 \begin{eqnarray}
  \int_{\left(x_{1},y_{1},z_{1} \right)}^{\left(x_{1}, y_{2}, z_{2} \right)} f(x,y,z) dy dz:=\int_{y_1}^{y_2} \int_{z_1}^{z_2} f(x_1,y,z)  dz dy.
 \end{eqnarray}
 We shall look at (\ref{vol}) term by term. For the first term we want to approximate the integral by a constant, i.e,
 \begin{equation}
 \begin{split}
 &\int_{\left(x_{i+1/2},y_{j-1/2},z_{k-1/2} \right)}^{\left(x_{i+1/2}, y_{j+1/2}, z_{k+1/2} \right)}\left(a_{11}\,\dfrac{\partial  v }{\partial x}+ a_{12}\,\dfrac{\partial v }{ \partial y}+a_{13}\,\dfrac{\partial  v }{\partial z} + x\, b_1\,v \right)d y\,dz \\ 
 & \approx \left(a_{11}\,\dfrac{\partial v }{\partial  x}+ a_{12}\,\dfrac{\partial v}{\partial y}+ a_{13}\,\dfrac{ \partial v }{\partial  z} + x\,b_1\,v \right)\bigg|_{\left(x_{{i+1/2}},y_{j},z_k \right)} \cdot h_{y_{j}}\cdot h_{z_{k}}.
 \end{split}
 \end{equation}
 To achieve this, it is clear that we now need to derive approximations of the  $ k (v) \cdot \bf n $ defined above at the mid-point $ \left(x_{i+1/2},y_{j},z_{k} \right) $, of the interval $ I_{x_{i}} $ for $ i = 0, 1,\cdots N_1-1$.
  This discussion is divided into two cases for $ i \geq 1 $, and $ i = 0\, $ on the interval $ I_{x_0} = [0,x_{1}] $. This is really an extension of the two
 dimensional fitted finite volume presented  \cite{huangfitted2009}.
 
 \textbf{\underline{Case I}:} For $ i\geq 1 $.\\
 
 Let set $ a_{11}(\tau, x, y, z, \alpha) = \overline{a}_1(\tau, x, y, z, \alpha)\,x^2  $. We approximate the term $ \left(a_{11} \dfrac{\partial v}{\partial x}+ x\,b_1\,v \right) $ by solving the following two points boundary value problem
 \begin{equation}\label{vie2}
 \begin{split}
 & \left(\overline{a}_1(\tau, x_{i+1/2}, y_j, z_k, \alpha_{i,j,k})\,x \dfrac{ \partial v}{ \partial  x}+ {b_1}(\tau, x_{i+1/2}, y_j, z_k, \alpha_{i,j,k})\,v \right)'= 0, \\ 
 & v(x_{i},y_{j},z_k)= v_{i,j,k},\,\,\,\, v(x_{i+1}, y_{j}, z_k)= v_{i+1,j,k},
 \end{split}
 \end{equation}
 integrating (\ref{vie2}) yields the first-order linear equations
 \begin{equation}
 \overline{a}_1(\tau, x_{i+1/2}, y_j, z_k, \alpha_{i,j,k})\,x \dfrac{ \partial v}{ \partial x}+ {b_1}(\tau, x_{i+1/2}, y_j, z_k, \alpha_{i,j,k})\,v  = C_1,
 \end{equation}
 where $ C_1 $ denotes an additive constant. As in  (\cite{huangfitted2009}), we get 
 \begin{equation}
 C_1 = \dfrac{{b_1}_{i+1/2,j,k}(\tau,\alpha_{i,j,k})\,\left(x_{i+1}^{\beta_{i,j,k}(\tau)}\,v_{i+1,j,k}-x_{i}^{\beta_{i,j,k}(\tau)}\,v_{i,j,k} \right)}{x_{i+1}^{\beta_{i,j,k}(\tau)}-x_{i}^{\beta_{i,j,k}(\tau)}}.
 \end{equation}
 Therefore,
 \begin{equation}
 \begin{split}
 &  a_{11}\,\dfrac{\partial  v }{\partial x}+ a_{12}\,\dfrac{ \partial v }{ \partial y}+ a_{13}\,\dfrac{\partial v }{\partial z} +x\, b_1\,v \bigg|_{\left(x_{{i+1/2}},y_{j},z_k \right)}  \approx   \\ 
 &  x_{i+1/2}\,\left( \dfrac{{b_1}_{i+1/2,j,k}(\tau,\alpha_{i,j,k})\,\left(x_{i+1}^{\beta_{i,j,k}(\tau)}\,v_{i+1,j,k}-x_{i}^{\beta_{i,j,k}(\tau)}\,v_{i,j,k} \right)}{x_{i+1}^{\beta_{i,j,k}(\tau)}-x_{i}^{\beta_{i,j,k}(\tau)}} \right.\\ 
 &\left.+ {d_1}_{i,j,k}(\tau,\alpha_{i,j,k})\,y_j\,z_k\,\dfrac{ \partial v }{\partial y} \bigg|_{\left(x_{1_{i+1/2}},y_{j},z_k \right)} + {d_2}_{i,j,k}(\tau,\alpha_{i,j,k})\,y_j\,z_k\,\dfrac{\partial v }{\partial z} \bigg|_{\left(x_{1_{i+1/2}},y_{j},z_k \right)} \right),
 \end{split}
 \end{equation}
 where $ \beta_{i,j,k}(\tau) =\dfrac{{b_1}_{i+1/2,j,k}(\tau,\alpha_{i,j,k})}{{\overline{a}_1}_{i+1/2,j,k}(\tau,\alpha_{i,j,k})} $, $ a_{12} = a_{21} = d_{1}(\tau,x,y,z,\alpha)\,x\,y\,z  $ and $ a_{13} = a_{31} = d_{2}(\tau,x,y,z,\alpha)\,x\,y\,z $.\\
 Note that in this deduction, we have assumed that $ {b_1}_{i+1/2,j,k}(\tau,\alpha_{i,j,k}) \neq 0 $. Finally, we use the forward difference,
 \begin{equation*}
 \dfrac{\partial v }{\partial y}\bigg|_{\left(x_{1_{i+1/2}},y_{j},z_k \right)} \approx\dfrac{v_{i,j+1,k}-v_{i,j,k}}{h_{y_j}},\,\,\,\dfrac{\partial v }{\partial z}\bigg|_{\left(x_{1_{i+1/2}},y_{j},z_k \right)}  \approx\dfrac{v_{i,j,k+1}-v_{i,j,k}}{h_{z_k}}.
 \end{equation*}
 We finally have
 \small{
 	\begin{equation}\label{vol1}
 	\begin{split}
 	&\left[a_{11}\,\dfrac{\partial v }{ \partial x}+ a_{12}\,\dfrac{ \partial v}{ \partial y}+ a_{13}\,\dfrac{\partial  v}{ \partial z}+x\, b_1\,v \right]_{\left(x_{i+1/2},y_{j},z_k \right)}
 	\cdot h_{y_{j}}\cdot h_{z_{k}} \\ 
 	& \approx x_{i+1/2}\left( \dfrac{{b_1}_{i+1/2,j,k}(\tau,\alpha_{i,j,k})\,\left(x_{i+1}^{\beta_{i,j,k}(\tau)}\,v_{i+1,j,k}-x_{i}^{\beta_{i,j,k}(\tau)}\,v_{i,j,k} \right)}{x_{i+1}^{\beta_{i,j,k}(\tau)}- x_{i}^{\beta_{i,j,k}(\tau)}}+\right. \\ 
 	&\left. {d_1}_{i,j,k}(\tau,\alpha_{i,j,k})\,y_j\,z_k\,\dfrac{v_{i,j+1,k}-v_{i,j,k}}{h_{y_j}}+ {d_2}_{i,j,k}(\tau,\alpha_{i,j,k})\,y_j\,z_k\,\dfrac{v_{i,j,k+1}-v_{i,j,k}}{h_{z_k}} \right) \cdot {h_{y_j}}\cdot {h_{z_k}}.
 	\end{split}
 	\end{equation}}
 Similarly, the second term in (\ref{vol}) can be approximated by
 \begin{equation}\label{vol2}
 \begin{split}
 &\left[a_{11}\,\dfrac{\partial v }{\partial  x}+ a_{12}\,\dfrac{\partial v}{\partial y}+ a_{13}\,\dfrac{\partial v}{ \partial z}+ x\,b_1\,v \right]_{\left(x_{i-1/2},y_{j},z_k \right)}
 \cdot h_{y_{j}}\cdot h_{z_{k}} \\ 
 & \approx x_{i-1/2}\left( \dfrac{{b_1}_{i-1/2,j,k}(\tau,\alpha_{i,j,k})\,\left(x_{i}^{\beta_{i-1,j,k}(\tau)}\,v_{i,j,k}-x_{i-1}^{\beta_{i-1,j,k}(\tau)}\,v_{i-1,j,k} \right)}{x_{i}^{\beta_{i-1,j,k}(\tau)}- x_{i-1}^{\beta_{i-1,j,k}(\tau)}} +\right.\\ 
 &\left. {d_1}_{i,j,k}(\tau,\alpha_{i,j,k})\,y_j\,z_k\,\dfrac{v_{i,j+1,k}-v_{i,j,k}}{h_{y_j}} + {d_2}_{i,j,k}(\tau,\alpha_{i,j,k})\,y_j\,z_k\,\dfrac{v_{i,j,k+1}-v_{i,j,k}}{h_{z_k}} \right) \cdot {h_{y_j}}\cdot {h_{z_k}}.
 \end{split}
 \end{equation}
 \textbf{\underline{Case II:}} Approximation of the flux at $ i = 0 $ on the interval $ I_{x_0} = [0,x_1] $.
 Note that the analysis in  the case I does not apply to the approximation of the flux on $ I_{x_0} $ because it is the degenerated zone. Therefore, we reconsider the following form
 \begin{equation}\label{vie4}
 \begin{split}
 \bigg({\overline{a}_1}_{x_{1/2},j,k}(\tau,\alpha_{1,j,k})\,x \dfrac{\partial v}{\partial x}+ {b_1}_{x_{1/2},j,k}(\tau,\alpha)\,v \bigg)' \equiv C_2\,\,\,\textbf{in}\,\,[0,x_1] \\
 v(x_0, y_j, z_k)= v_{0,j,k},\,\,\,\, v(x_1,y_j,z_k) = v_{1,j,k},
 \end{split}
 \end{equation}
 where $ C_2 $ is an unknown constant to be determined. Integrating (\ref{vie4}), we find
 \begin{equation}
 \begin{split}
 &\left(\overline{a}_1(\tau,\alpha_{1,j,k})\,x \dfrac{\partial v}{\partial x}+ {b_1}\,v\right)\bigg|_{\left(x_{1_{1/2}},y_{j},z_k \right)} =\\ 
 &  \dfrac{1}{2}\left[ ({\overline{a}_1}_{x_{1/2},j,k}(\tau,\alpha_{1,j,k})+{b_1}_{x_{1/2},j,k}(\tau,\alpha_{1,j,k}))v_{1,j,k}-\right.\\ 
 &\left. ({\overline{a}_1}_{x_{1/2},j,k}(\tau,\alpha_{1,j,k})-{b_1}_{x_{1/2},j,k}(\tau,\alpha_{1,j,k}))v_{0,j,k}\right]
 \end{split}
 \end{equation}
 and deduce that
 \begin{equation}
 \begin{split}
 &\left[a_{11}\,\dfrac{\partial v }{\partial x}+ a_{12}\,\dfrac{\partial v}{\partial y}+ a_{13}\,\dfrac{\partial v}{\partial z} +x\, b_1\,v \right]_{\left(x_{1/2},y_{j},z_k \right)}
 \cdot h_{y_j}\cdot h_{z_k}\approx \\
 & x_{1/2}\left( \dfrac{1}{2}\left[ ({\overline{a}_1}_{x_{1/2},j,k} (\tau,\alpha_{1,j,k})+{b_1}_{x_{1/2},j,k}(\tau,\alpha_{1,j,k}))\,v_{1,j,k}\right. \right. \\  
 & \left.\left. -({\overline{a}_1}_{x_{1/2},j,k}( \tau,\alpha_{1,j,k})-{b_1}_{x_{1/2},j,k}(\tau,\alpha_{1,j,k}))\,v_{0,j,k}\right] \right.\\ 
 &\left.+ {d_1}_{1,j,k}(\tau,\alpha_{1,j,k})\,y_j\,z_k\,\dfrac{v_{1,j+1,k}-v_{1,j,k}}{h_{y_j}} +\right.\\ 
 &\left. {d_2}_{1,j,k}(\tau,\alpha_{1,j,k})\,y_j\,z_k\,\dfrac{v_{1,j,k+1}-v_{1,j,k}}{h_{z_k}} \right) \cdot h_{y_j}\cdot h_{z_k}.
 \end{split}
 \end{equation}
 \begin{rema}
 	Notice that if $ I_{x}= [\zeta,x_{\text{max}}]$ with  $\zeta 
 	\neq 0  $, we do not  need to truncate the interval $ I_x $, we just apply the fitted finite volume method directly   as for $ i \geq 1$.
 \end{rema}
 \textbf{\underline{Case III}:} For $ j\geq 1 $. 
 For the third term in (\ref{vol}) we want to approximate the integral by a constant, i.e,
 \begin{equation}
 \begin{split}
 &\int_{\left(x_{i-1/2},y_{j+1/2},z_{k-1/2} \right)}^{\left(x_{i+1/2}, y_{j+1/2}, z_{k+1/2} \right)}\left(a_{21}\,\dfrac{\partial v }{\partial x}+ a_{22}\,\dfrac{\partial v }{\partial y}+a_{23}\,\dfrac{\partial v }{\partial z}+ y\,b_2\,v \right)d x\,dz \\ 
 & \approx \left(a_{21}\,\dfrac{\partial v }{\partial x}+ a_{22}\,\dfrac{\partial v}{\partial y}+ a_{23}\,\dfrac{\partial v }{\partial z} + y\,b_2\,v \right)|_{\left(x_{i},y_{j+1/2},z_k \right)} \cdot h_{x_{i}}\cdot h_{z_{k}}.
 \end{split}
 \end{equation}
 Following the same procedure for the case I of this section, we find that
 \begin{equation}\label{vol1'}
 \begin{split}
 &\left[a_{21}\,\dfrac{\partial v }{\partial x}+ a_{22}\,\dfrac{\partial v}{\partial y}+ a_{23}\,\dfrac{\partial v}{ \partial z}+y\, b_2\,v \right]_{\left(x_{i},y_{j+1/2},z_k \right)}
 \cdot h_{x_{i}}\cdot h_{z_{k}} \\
 &\approx y_{j+1/2}\left( \dfrac{{b_2}_{i,j+1/2,k}(\tau,\alpha_{i,j,k})\,\left(y_{j+1}^{{\beta_1}_{i,j,k}(\tau)}\,v_{i,j+1,k}-y_{j}^{{\beta_1}_{i,j,k}(\tau)}\,v_{i,j,k} \right)}{y_{j+1}^{{\beta_1}_{i,j,k}(\tau)}- y_{j}^{{\beta_1}_{i,j,k}(\tau)}} + \right. \\
 & \left. {d_1}_{i,j,k}(\tau,\alpha_{i,j,k})\,x_i\,z_k\,\dfrac{v_{i+1,j,k}-v_{i,j,k}}{h_{x_i}} + {d_3}_{i,j,k}(\tau,\alpha_{i,j,k})\,x_i\,z_k\,\dfrac{v_{i,j,k+1}-v_{i,j,k}}{h_{z_k}} \right) \cdot {h_{x_i}}\cdot {h_{z_k}},
 \end{split}
 \end{equation}
 where $ {\beta_1}{i,j,k}(\tau) =\dfrac{{b_2}_{i,j+1/2,k}(\tau,\alpha_{i,j,k})}{{\overline{a}_2}_{i,j+1/2,k}(\tau,\alpha_{i,j,k})} $,\,\, $ a_{22}(\tau, x, y, z,\alpha) = {\overline{a}_2}(\tau,x,y ,z,\alpha)\,y^2,  $\\
 and $ a_{23} = a_{32} = d_{3}(\tau,x,y,z,\alpha)\,x\,y\,z$. Similary, the fourth term in (\ref{vol}) can be approximated by
 \begin{equation}\label{vol2'}
 \begin{split}
 &\left[a_{21}\,\dfrac{\partial v }{d\partial x}+ a_{22}\,\dfrac{\partial v}{\partial y}+ a_{23}\,\dfrac{\partial v}{\partial z}+y\, b_2\,v \right]_{\left(x_{i},y_{j-1/2},z_k \right)}
 \cdot h_{x_{i}}\cdot h_{z_{k}}  \\
 &\approx y_{j-1/2}\left( \dfrac{{b_2}_{i,j-1/2,k}(\tau,\alpha_{i,j,k})\,\left(y_{j}^{{\beta_1}_{i,j-1,k}(\tau)}\,v_{i,j,k}-y_{j-1}^{{\beta_1}_{i,j-1,k}(\tau)}\,v_{i,j-1,k} \right)}{y_{j}^{{\beta_1}_{i,j-1,k}(\tau)}- y_{j-1}^{{\beta_1}_{i,j-1,k}(\tau)}} +\right.\\ 
 & \left. {d_1}_{i,j,k}(\tau,\alpha_{i,j,k})\,x_i\,z_k\,\dfrac{v_{i+1,j,k}-v_{i,j,k}}{h_{x_i}}+ {d_3}_{i,j,k}(\tau,\alpha_{i,j,k})\,x_i\,z_k\,\dfrac{v_{i,j,k+1}-v_{i,j,k}}{h_{z_k}} \right) \cdot {h_{x_i}}\cdot {h_{z_k}}.
 \end{split}
 \end{equation}
 \textbf{\underline{Case IV:}} Approximation of the flux at $ I_{y_0} $ i.e for $j= 0 $. Using the same procedure  for the approximation of the flux at $ I_{x_0} $, we deduce that
 \begin{equation}
 \begin{split}
 &\left[a_{21}\,\dfrac{\partial v }{ \partial x}+ a_{22}\,\dfrac{ \partial v}{\partial y}+ a_{23}\,\dfrac{\partial v}{\partial z} + y\,b_2\,v \right]_{\left(x_{i},y_{1/2},z_k \right)}
 \cdot h_{x_i}\cdot h_{z_k} \approx \\
 & y_{1/2}\left( \dfrac{1}{2}\left[ ({\overline{a}_2}_{i,y_{1/2},k}(\tau,\alpha_{i,1,k})+{b_2}_{i,y_{1/2},k}(\tau,\alpha_{i,1,k}))v_{i,1,k}\right.\right. \\ 
 & \left.\left.-({\overline{a}_2}_{i,y_{1/2},k}(\tau,\alpha_{i,1,k})-{b_2}_{i,y_{1/2},k}(\tau,\alpha_{i,1,k}))\,v_{i,0,k}\right] \right.\\ 
 &\left.+ {d_1}_{i,1,k}(\tau,\alpha_{i,1,k})\,x_i\,z_k\,\dfrac{v_{i+1,1,k}-v_{i,1,k}}{h_{x_i}} +\right.\\ 
 &\left. {d_3}_{i,1,k}(\tau,\alpha_{i,1,k})\,x_i\,z_k\,\dfrac{v_{i,1,k+1}-v_{i,1,k}}{h_{z_k}} \right) \cdot h_{x_i}\cdot h_{z_k}.
 \end{split}
 \end{equation}
 For the fifth term in (\ref{vol}) we want to approximate the integral with  a constant.  Following the same procedure  as in the case I and  case III, we have
 \begin{equation}\label{vol11'}
 \begin{split}
 &\left[a_{31}\,\dfrac{\partial v }{\partial  x}+ a_{32}\,\dfrac{\partial  v}{\partial  y}+a_{33}\,\dfrac{\partial  v}{\partial  z}+z\, b_3\,v \right]_{\left(x_{i},y_{j},z_{k+1/2} \right)}
 \cdot h_{x_{i}}\cdot h_{y_{j}}  \\
 &\approx z_{k+1/2}\left( \dfrac{{b_3}_{i,j,k+1/2}(\tau,\alpha_{i,j,k})\,\left(z_{k+1}^{{\beta_2}_{i,j,k}(\tau)}\,v_{i,j,k+1}-z_{k}^{{\beta_2}_{i,j,k}(\tau)}\,v_{i,j,k} \right)}{z_{k+1}^{{\beta_2}_{i,j,k}(\tau)}- z_{k}^{{\beta_2}_{i,j,k}(\tau)}} +\right. \\ 
 & \left. {d_2}_{i,j,k}(\tau,\alpha_{i,j,k})\,x_i\,y_j\,\dfrac{v_{i+1,j,k}-v_{i,j,k}}{h_{x_i}}+ {d_3}_{i,j,k}(\tau,\alpha_{i,j,k})\,x_i\,y_j\,\dfrac{v_{i,j+1,k}-v_{i,j,k}}{h_{y_j}} \right) \cdot {h_{x_i}}\cdot {h_{y_j}},
 \end{split}
 \end{equation}
 where $ {\beta_2}{i,j,k}(\tau) =\dfrac{{b_3}_{i,j,k+1/2}(\tau,\alpha_{i,j,k})}{\bar{a_3}_{i,j,k+1/2}(\tau,\alpha_{i,j,k})} $,\,\, $ a_{33}(\tau, x, y, z,\alpha) = \overline{a}_3(\tau,x,y ,z,\alpha)\,z^2  $.\\ 
 Similarly, the sixth term in (\ref{vol}) can be approximated by
 \begin{equation}\label{vol22'}
 \begin{split}
 &\left[a_{31}\,\dfrac{\partial  v }{\partial x}+ a_{32}\,\dfrac{ \partial v}{ \partial y}+a_{33}\,\dfrac{\partial  v}{\partial z}+ z\,b_3\,v \right]_{\left(x_{i},y_{j},z_{k-1/2} \right)}
 \cdot h_{x_{i}}\cdot h_{y_{j}}  \\
 &\approx z_{k-1/2}\left( \dfrac{{b_3}_{i,j,k-1/2}(\tau,\alpha_{i,j,k})\,\left(z_{k}^{{\beta_2}_{i,j,k-1}(\tau)}\,v_{i,j,k}-z_{k-1}^{{{\beta_2}_{i,j,k-1}(\tau)}}\,v_{i,j,k-1} \right)}{z_{k}^{{\beta_2}_{i,j,k-1}(\tau)}- z_{k-1}^{{{\beta_2}_{i,j,k-1}(\tau)}}}+ \right. \\ 
 &\left. {d_2}_{i,j,k}(\tau,\alpha_{i,j,k})\,x_i\,y_j\,\dfrac{v_{i+1,j,k}-v_{i,j,k}}{h_{x_i}}+ {d_3}_{i,j,k}(\tau,\alpha_{i,j,k})\,x_i\,y_j\,\dfrac{v_{i,j+1,k}-v_{i,j,k}}{h_{y_j}} \right) \cdot {h_{x_i}}\cdot {h_{y_j}}. \end{split}
 \end{equation}
 \textbf{\underline{Case V:}} Approximation of the flux at $ I_{z_0} $. Using the same procedure for the Approximation of the flux at $ I_{z_0} $, we deduce that
 \begin{equation}
 \begin{split}
 &\left[a_{31}\,\dfrac{\partial  v }{\partial  x}+ a_{32}\,\dfrac{ \partial  v}{\partial  y}+ a_{33}\,\dfrac{\partial  v}{\partial  z} + z\,b_3\,v \right]_{\left(x_{i},y_{j},z_{1/2} \right)}
 \cdot h_{x_i}\cdot h_{y_j} \approx \\
 &  z_{1/2}\left( \dfrac{1}{2}\left[ (\bar{a_3}_{i,j,z_{1/2}}(\tau,\alpha_{i,j,1}) + {b_3}_{i,j,z_{1/2}}(\tau,\alpha_{i,j,1}))\,v_{i,j,1} \right. \right. \\ 
 & \left.\left.-(({\overline{a}_3}_{i,j,z_{1/2}}(\tau,\alpha_{i,j,1}) - {b_3}_{i,j,z_{1/2}}(\tau,\alpha_{i,j,1}))v_{i,j,0}\right] + \right.\\ \nonumber
 &\left. {d_2}_{i,j,1}(\tau,\alpha_{i,j,1})\,x_i\,y_j\,\dfrac{v_{i+1,j,1}-v_{i,j,1}}{h_{x_i}} + {d_3}_{i,j,1}(\tau,\alpha_{i,j,1}) \,x_i\,y_j\,\dfrac{v_{i,j+1,1}-v_{i,j,1}}{h_{y_j}} \right) \cdot h_{x_i}\cdot h_{y_j}.
 \end{split}
 \end{equation}
 Equation (\ref{bjr}) becomes by replacing the flux by his value for $ i = 1,\cdots,N_1-1 $, \,$ j = 1,\cdots,N_2-1 $,  \, $ k = 1,\cdots,N_3-1 $ and $ N =(N_1-1)\times (N_2-1)\times (N_3-1) $
 \begin{equation}\label{p5}
 \begin{cases}
 -\dfrac{d\, v_{i,j,k}(\tau)}{d\, \tau} 
 +  \underset{\alpha_{i,j,k} \in \mathcal{A}^{N}}{\sup}\, \left[ e_{i-1,j,k}^{i,j,k}\, v_{i-1,j,k} +e_{i,j,k}^{i,j,k}\, v_{i,j,k}+ e_{i+1,j,k}^{i,j,k}\, v_{i+1,j,k} \right. \\
  \left. + e_{i,j-1,k}^{i,j,k}\, v_{i,j-1,k}+e_{i,j+1,k}^{i,j,k} v_{i,j+1}+ e_{i,j,k-1}^{i,j,k}\, v_{i,j-1,k}+e_{i,j,k+1}^{i,j,k} v_{i,j,k+1} \right] = 0,\\
 ~~~\mbox{with}~~ \,\,\,\,  \textbf{v}(0) \,\,\,\text{given},
 \end{cases}
 \end{equation}
 This can be rewritten as the Ordinary Differential Equation (ODE) coupled with optimization
 {\small{
 		\begin{equation}\label{pan1}
 		\begin{cases}
 		\dfrac{d\, \textbf{v}(\tau)}{d\, \tau}  =\underset{\alpha \in \mathcal{A}^N}{\sup}\,\left[A (\tau,\alpha)\,\textbf{v}(\tau) + G(\tau, \alpha) \right] \\
 		~~~\mbox{with}~~ \,\,\,\,  \textbf{v}(0) \,\,\,\text{given},
 		\end{cases}
 		\end{equation}
 		or
 		\begin{equation}\label{p4}
 		\begin{cases}
 		\dfrac{d\,\textbf{v}(\tau)}{d\,\tau} + \underset{\alpha \in \mathcal{A}^N}{\inf}\,\left[E (\tau, \alpha)\,\textbf{v}(\tau) + F(\tau,\alpha) \right] = 0, \\
 		~~~\mbox{with}~~ \,\,\,\,  \textbf{v}(0)\,\,\,\text{given},
 		\end{cases}
 		\end{equation} 
 		where  $ A (\tau,\alpha) = - E (\tau,\alpha)$  is an $N\times N$ matrix, $  \mathcal{A}^{N} = \underset{(N_{1}-1)\times(N_2-1)\times(N_3-1)}{\underbrace{\mathcal{A} \times \mathcal{A}\times \cdots \times \mathcal{A}}} $ \,\, $G (\tau, \alpha) =-F(\tau, \alpha) $ depends of  the boundary condition  and the term $c$,  
 		$\textbf{v}= \left(v_{i,j,k}\right) $.
 		By setting
 		$n_1=N_1-1,\; n_2=N_2-1; \;n_3=N_3-1, \;\; I:=I (i,j,k)= i + (j-1)n_1 +(k-1)n_1 n_2$ and  $J:=J (i',j',k')=  i' + (j'-1)n_1 +(k'-1)n_1 n_2 $, we have 
 		$E (\tau, \alpha) (I,J)= \left(e_{i',j',k'}^{i,j,k}\right) $,  $i', i= 1,\cdots, N_1-1 $, \,\, $ j',j = 1,\cdots, N_2-1 $\,\, and\,\, $k', k = 1,\cdots, N_3-1 $  where the coefficients are defined by 
 		\begin{equation}
 		\begin{split}
 		& e_{i+1,j,k}^{i,j,k} = - \dfrac{{d_2}_{i,j,k}(\tau, \alpha_{i,j,k})\,x_i\,y_j}{h_{x_i}} - \dfrac{{d_1}_{i,j,k}(\tau, \alpha_{i,j,k})\,x_i\,z_k}{h_{x_i}}\\
 		& - x_{i+1/2}\dfrac{{b_1}_{i+1/2,j,k}(\tau, \alpha_{i,j,k})\,x_{i+1}^{\beta_{i,j,k}(\tau)} }{h_{x_i}\left(x_{i+1}^{\beta_{i,j,k}(\tau)}- x_{i}^{\beta_{i,j,k}(\tau)}\right)},\\
 		& e_{i-1,j, k}^{i,j,k} = - x_{i-1/2}\dfrac{{b_1}_{i-1/2,j,k}(\tau, \alpha_{i,j,k})\,x_{i-1}^{\beta_{i-1,j,k}(\tau)}}{h_{x_i}\left(x_{i}^{\beta_{i-1,j,k}(\tau)}- x_{i-1}^{\beta_{i-1,j,k}(\tau)}\right)},\\ 
 		& e_{i,j+1,k}^{i,j,k} = - \dfrac{{d_1}_{i,j,k}(\tau, \alpha_{i,j,k})\,y_j\,z_k}{h_{y_j}} - \dfrac{{d_3}_{i,j,k}(\tau, \alpha_{i,j,k})\,x_i\,y_j}{h_{y_j}} \\
 		&- y_{j+1/2}\dfrac{{b_2}_{i,j+1/2,k}(\tau, \alpha_{i,j,k})\,y_{j+1}^{{\beta_1}_{i,j,k}(\tau)} }{h_{y_j}\left(y_{j+1}^{{\beta_1}_{i,j,k}(\tau)}- y_{j}^{{\beta_1}_{i,j,k}(\tau)}\right)},\\
 		&  e_{i,j-1,k}^{i,j,k} = - y_{j-1/2}\dfrac{{b_2}_{i,j-1/2,k}(\tau, \alpha_{i,j,k})\,y_{j-1}^{{\beta_1}_{i,j-1,k}(\tau)}}{h_{y_j}\left(y_{j}^{{\beta_1}_{i,j-1,k}(\tau)}- y_{j-1}^{{\beta_1}_{i,j-1,k}(\tau)}\right)},\\
 		&  e_{i,j,k+1}^{i,j,k} = - \dfrac{{d_2}_{i,j,k}(\tau, \alpha_{i,j,k})\,y_j\,z_k}{h_{z_k}} - \dfrac{{d_3}_{i,j,k}(\tau, \alpha_{i,j,k})x_i\,z_k}{h_{z_k}} \\
 		&- z_{k+1/2}\dfrac{{b_3}_{i,j,k+1/2}(\tau, \alpha_{i,j,k})\,z_{k+1}^{{\beta_2}_{i,j,k}(\tau)} }{h_{z_k}\left(z_{k+1}^{{\beta_2}_{i,j,k}(\tau)}- z_{k}^{{\beta_2}_{i,j,k}(\tau)}\right)},\\
 		&  e_{i,j,k-1}^{i,j,k} = - z_{k-1/2}\dfrac{{b_3}_{i,j,k-1/2}(\tau, \alpha_{i,j,k})\,z_{k-1}^{{\beta_2}_{i,j,k-1}(\tau)} }{h_{z_k}\left(z_{k}^{{\beta_2}_{i,j,k-1}(\tau)}- z_{k-1}^{{\beta_2}_{i,j,k-1}(\tau)}\right)},
 		\end{split}
 		\end{equation}
 and 
 \begin{equation}
 \begin{split}
 & 	e_{i,j,k}^{i,j,k} \\
 & = 
 x_{i-1/2}\dfrac{{b_1}_{i-1/2,j,k}(\tau, \alpha_{i,j,k})\,x_{i}^{\beta_{i-1,j,k}(\tau)}}{h_{x_i}\left(x_{i}^{\beta_{i-1,j,k}(\tau)}- x_{i-1}^{\beta_{i-1,j,k}(\tau)}\right)} + y_{j-1/2}\dfrac{{b_2}_{i,j-1/2,k}(\tau, \alpha_{i,j,k})\,y_{j}^{{\beta_1}_{i,j-1,k}(\tau)}}{h_{y_j}\left(y_{j}^{{\beta_1}_{i,j-1,k}(\tau)}- y_{j-1}^{{\beta_1}_{i,j-1,k}(\tau)}\right)} \\ 
 & +\dfrac{{d_3}_{i,j,k}(\tau, \alpha_{i,j,k})\,x_i\,y_j}{h_{y_j}} +\dfrac{{d_3}_{i,j,k}(\tau, \alpha_{i,j,k})\,x_i\,z_k}{h_{z_k}} + \dfrac{{d_2}_{i,j,k}(\tau, \alpha_{i,j,k})\,x_i\,y_j}{h_{x_i}} \\ 
 & - c_{i,j,k}(\tau, \alpha_{i,j,k}) +\dfrac{{d_1}_{i,j,k}(\tau, \alpha_{i,j,k})\,y_j\,z_k}{h_{y_j}}+\dfrac{{d_2}_{i,j,k}(\tau,\alpha_{i,j,k})\,y_j\,z_k}{h_{z_k}}+\dfrac{{d_1}_{i,j,k}(\tau,\alpha_{i,j,k})\,x_i\,z_k}{h_{x_i}}\\
 &+ z_{k-1/2}\dfrac{{b_3}_{i,j,k-1/2}(\tau, \alpha_{i,j,k})\,z_{k}^{{\beta_2}_{i,j,k-1}(\tau)} }{h_{z_k}\left(z_{k}^{{\beta_2}_{i,j,k-1}(\tau)}- z_{k-1}^{{\beta_2}_{i,j,k-1}(\tau)}\right)}+z_{k+1/2}\dfrac{{b_3}_{i,j,k+1/2}(\tau, \alpha_{i,j,k})\,z_{k}^{{\beta_2}_{i,j,k}(\tau)} }{h_{z_k}\left(z_{k+1}^{{\beta_2}_{i,j,k}(\tau)}- z_{k}^{{\beta_2}_{i,j,k}(\tau)}\right)} \\ 
 &	+ x_{i+1/2}\dfrac{{b_1}_{i+1/2,j,k}(\tau, \alpha_{i,j,k})\,x_{i}^{\beta_{i,j,k}(\tau)} }{h_{x_i}\left(x_{i+1}^{\beta_{i,j,k}(\tau)}- x_{i}^{\beta_{i,j,k}(\tau)}\right)}+y_{j+1/2}\dfrac{{b_2}_{i,j+1/2,k}(\tau, \alpha_{i,j,k})\,y_{j}^{{\beta_1}_{i,j,k}(\tau)} }{h_{y_j}\left(y_{j+1}^{{\beta_1}_{i,j,k}(\tau)}- y_{j}^{{\beta_1}_{i,j,k}(\tau)}\right)}
 \end{split}
 \end{equation}
 for $ i = 2,\cdots, N_1-1 $, $ j = 2,\cdots, N_2-1 $  and $ k = 2,\cdots, N_3-1 $ and
 \begin{equation}
 \begin{split}
 e_{0,j,k}^{1,j,k}& = - \dfrac{1}{2\,x_2 } \,x_{1}(\bar{a_1}_{x_{1/2},i,j}(\tau,\alpha_{1,j,k})- {b_1}_{x_{1/2},j,k}(\tau,\alpha_{1,j,k}))\,v_{0,j,k},\\
 e_{1,j,k}^{1,j,k} & = \dfrac{1}{2\,x_2 } \,x_{1}( \bar{a_1}_{x_{1/2},i,j}(\tau,\alpha_{1,j,k})+ {b_1}_{x_{1/2},j,k}(\tau,\alpha_{1,j,k})) - \dfrac{1}{3}\,c_{1,j,k}(\tau,\alpha_{1,j,k})   \\ 
 &  +\dfrac{{d_2}_{1,j,k}(\tau,\alpha_{1,j,k})\,x_1\,y_j}{h_{x_1}} +  \dfrac{{d_1}_{1,j,k}(\tau,\alpha_{1,j,k})\,x_1\,z_k}{h_{x_1}}\\&+ x_{1+1/2}\dfrac{{b_1}_{1+1/2,j,k}(\tau,\alpha_{1,j,k})\,x_{1}^{\beta_{1,j,k}(\tau)} }{h_{x_1}\left(x_{2}^{\beta_{1,j,k}(\tau)}- x_{1}^{\beta_{1,j,k}(\tau)}\right)}, \\
 e_{2,j,k}^{1,j,k} & = -\dfrac{{d_2}_{1,j,k}(\tau,\alpha_{1,j,k})\,x_1\,y_j}{h_{x_1}} - \dfrac{{d_1}_{1,j,k}(\tau,\alpha_{1,j,k})\,x_1\,z_k}{h_{x_1}} \\
 &- x_{1+1/2}\dfrac{{b_1}_{1+1/2,j,k}(\tau,\alpha_{1,j,k})\,x_{2}^{\beta_{1,j,k}(\tau)} }{h_{x_1}\left(x_{2}^{\beta_{1,j,k}(\tau)}- x_{1}^{\beta_{1,j,k}(\tau)}\right)}, \\
 e_{i,0,k}^{i,1,k}& = -\dfrac{1}{2\,y_2 }\,y_1( \bar{a_2}{i,y_{1/2},k}(\tau,\alpha_{i,1,k})- {b_2}_{i,y_{1/2},k}(\tau,\alpha_{i,1,k})) \,v_{i,0,k},\\
 e_{i,1,k}^{i,1,k}& = \dfrac{1}{2\,y_2 } \,y_1 ( \bar{a_2}_{i,y_{1/2},k}(\tau,\alpha_{i,1,k})+ {b_2}_{i,y_{1/2},k}(\tau,\alpha_{i,1,k})) - \dfrac{1}{3}\,c_{i,1,k}(\tau,\alpha_{i,1,k})   \\ 
 &+ \dfrac{{d_1}_{i,1,k}(\tau,\alpha_{i,1,k})\,y_1\,z_k}{h_{y_1}} +  \dfrac{{d_3}_{i,1,k}(\tau,\alpha_{i,1,k})\,x_i\,y_1}{h_{y_1}}\\
 &+ y_{1+1/2}\dfrac{{b_2}_{i,1+1/2,k}(\tau,\alpha_{i,1,k})\,y_{1}^{{\beta_1}_{i,1,k}(\tau)} }{h_{y_1}\left(y_{2}^{{\beta_1}_{i,1,k}(\tau)}- y_{1}^{{\beta_1}_{i,1,k}(\tau)}\right)},
 \end{split}\end{equation}
 \begin{equation}
 \begin{split}
 e_{i,2,k}^{i,1,k}& = -\dfrac{{d_1}_{i,1,k}(\tau,\alpha_{i,1,k})\,y_1\,z_k}{h_{y_1}} - \dfrac{{d_3}_{i,1,k}(\tau,\alpha_{i,1,k})\,x_i\,y_1}{h_{y_1}}\\& - y_{1+1/2}\dfrac{{b_2}_{i,1+1/2,k}(\tau,\alpha_{i,1,k})\,y_{2}^{{\beta_1}_{i,1,k}(\tau)} }{h_{y_1}\left(y_{2}^{{\beta_1}_{i,1,k}(\tau)}- y_{1}^{{\beta_1}_{i,1,k}(\tau)}\right)},
 \end{split}
 \end{equation}
 \begin{equation}
 \begin{split}
 e_{i,j,0}^{i,j,1}& = - \dfrac{1}{2\,z_2 } \,z_{1}(\bar{a_3}_{i,j,z_{1/2}}(\tau,\alpha_{i,j,1})- {b_3}_{i,j,z_{1/2}}(\tau,\alpha_{i,j,1}))\,v_{i,j,0},\\
 e_{i,j,1}^{i,j,1}& =  \dfrac{1}{2\,z_2 } \,z_{1}( \bar{a_3}_{i,j,z_{1/2}}(\tau,\alpha_{i,j,1}) + {b_3}_{i,j,z_{1/2}}(\tau,\alpha_{i,j,1})) \\
 &- \dfrac{1}{3}\,c_{i,j,1}(\tau,\alpha_{i,j,1}) + \dfrac{{d_2}_{i,j,1}(\tau,\alpha_{i,j,1})\,y_j\,z_1}{h_{z_1}}  \\ 
 &+ \dfrac{{d_3}_{i,j,1}(\tau,\alpha_{i,j,1})\,x_i\,z_1}{h_{z_1}}+ z_{1+1/2}\dfrac{{b_3}_{i,j,1+1/2}(\tau,\alpha_{i,j,1})\,z_{1}^{{\beta_2}_{i,j,1}(\tau)} }{h_{z_1}\left(z_{2}^{{\beta_2}_{i,j,1}(\tau)}- z_{1}^{{\beta_2}_{i,j,1}(\tau)}\right)},\\
 e_{i,j,2}^{i,j,1}& = -\dfrac{{d_2}_{i,j,1}(\tau,\alpha_{i,j,1})\,y_j\,z_1}{h_{z_1}} - \dfrac{{d_3}_{i,j,1}(\tau,\alpha_{i,j,1})\,x_i\,z_1}{h_{z_1}}\\& - z_{1+1/2}\dfrac{{{b_3}_{i,j,1+1/2}(\tau,\alpha_{i,j,1})}\,z_{2}^{{\beta_2}_{i,j,1}(\tau)} }{h_{z_1}\left(z_{2}^{{\beta_2}_{i,j,1}(\tau)}- z_{1}^{{\beta_2}_{i,j,1}(\tau)}\right)}.
 \end{split}
 \end{equation}
 $ G $ collects the given homogeneous boundary therm $ v_{0,j,k},\, v_{i,0,k}, \,v_{i,j,0},\, v_{N_1,j,k}, \, v_{i,N_2,k}\, $ \text{and}\,\,  $v_{i,j,N_3} $ for $ i = 1,\cdots, N_1-1 $, \,$ j = 1,\cdots, N_2-1 $\, and\, $ k = 1,\cdots, N_3-1 $. \\
 \begin{theorem} \label{tm}
 	Assume  that the coefficients of $ A $ given  by \eqref{A} are positive and $c<0$ \footnote{Indeed  $c$ can be positive but should be less than a certain threshold $c_0>0$}.
 	Let  \begin{align} h = \underset{{\underset{k = 1,\cdots ,N_3-1}{\underset{j = 1,\cdots ,N_2-1,}{i = 1,\cdots ,N_1-1}}}}{\max} \{h_{x_i},\, h_{y_j},\,\,h_{z_k}\}, \end{align} if $h$ is relatively small then the matrix $E (\tau,\alpha)$  in the system (\ref{p4}) is an $M $-matrix for any $ \alpha_{i,j,k}  \,\in\,\mathcal{A}^{N}$.
 \end{theorem}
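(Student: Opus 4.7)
The plan is to verify the three structural conditions that characterize an $M$--matrix with positive diagonal: $(i)$ every off-diagonal entry of $E(\tau,\alpha)$ is non-positive, $(ii)$ every diagonal entry is strictly positive, and $(iii)$ the matrix is weakly diagonally dominant by rows, with strict dominance on at least one row. Together with irreducibility, which follows immediately from the fact that the $7$-point nearest-neighbor stencil induces a connected graph on the truncated grid $\mathcal{G}$, these imply the $M$--matrix property by the classical Berman--Plemmons characterization.

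For the first step I would go through each of the six off-diagonal coefficients $e^{i,j,k}_{i\pm 1,j,k}$, $e^{i,j,k}_{i,j\pm 1,k}$, $e^{i,j,k}_{i,j,k\pm 1}$ and note that all $\overline{a}_\ell, b_\ell, d_\ell$ coefficients, all grid points $x_i,y_j,z_k$, and all spacings are positive by hypothesis. The exponents $\beta_{i,j,k}(\tau)=b_1/\overline{a}_1$, ${\beta_1}_{i,j,k}(\tau)$, ${\beta_2}_{i,j,k}(\tau)$ are then positive, so the denominators of the form $x_{i+1}^\beta-x_i^\beta$ etc. are strictly positive. Every off-diagonal entry is thus a sum of manifestly non-positive terms, giving $(i)$. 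For $(ii)$, the diagonal entry $e^{i,j,k}_{i,j,k}$ is a sum of positive $b$-type and $d$-type contributions together with $-c_{i,j,k}$; since $c<0$ by hypothesis (or $c<c_0$ with $c_0$ small, as in the footnote), this is strictly positive.

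The main step, and the bulk of the work, is the row-sum computation in $(iii)$. I would add the diagonal entry to its six off-diagonal siblings and show two things. First, the six $d$-contributions on the diagonal, namely the terms $d_3x_iy_j/h_{y_j}$, $d_3x_iz_k/h_{z_k}$, $d_2x_iy_j/h_{x_i}$, $d_1y_jz_k/h_{y_j}$, $d_2y_jz_k/h_{z_k}$, $d_1x_iz_k/h_{x_i}$, are cancelled \emph{exactly} by the corresponding $d$-pieces of $e^{i,j,k}_{i+1,j,k}$, $e^{i,j,k}_{i,j+1,k}$, $e^{i,j,k}_{i,j,k+1}$. Second, the $b_1$-pieces telescope: the contribution of $e^{i,j,k}_{i,j,k}$ together with those of $e^{i,j,k}_{i\pm 1,j,k}$ collapses to $h_{x_i}^{-1}\bigl[x_{i-1/2}{b_1}_{i-1/2,j,k}-x_{i+1/2}{b_1}_{i+1/2,j,k}\bigr]$, and analogously for $b_2$ and $b_3$. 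The total row sum therefore becomes a centered finite-difference approximation of $-\partial_x(xb_1)-\partial_y(yb_2)-\partial_z(zb_3)-c_{i,j,k}$. Assuming $xb_1,yb_2,zb_3$ are Lipschitz in their corresponding variables with constants independent of $\alpha$, the difference quotients are uniformly bounded in $h$; hence for $h$ sufficiently small the row sum is dominated by $-c_{i,j,k}>0$ and is therefore non-negative. The boundary-adjacent rows ($i=1$, $j=1$, or $k=1$) acquire only a subset of the off-diagonal couplings since the interactions with $v_{0,\cdot,\cdot}$, $v_{\cdot,0,\cdot}$, $v_{\cdot,\cdot,0}$ are absorbed into the load vector $G(\tau,\alpha)$; on these rows the inequality is strict.

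The hard part will be the bookkeeping in Step 3: one must track every $d$-, $b$-, and boundary term through Cases I--V, verify the exact cancellation of the cross-derivative contributions, and confirm the telescoping of the $b$-terms for the fitted coefficients (which involve the somewhat delicate quantities $x_i^{\beta_{i,j,k}}$). Once this computation is carried out, the $M$--matrix conclusion follows at once from irreducible weak diagonal dominance with positive diagonal and non-positive off-diagonals.
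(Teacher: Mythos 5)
Your overall strategy is the same as the paper's: verify the sign pattern of the entries of $E(\tau,\alpha)$ (using that $\beta = b_1/\overline{a}_1$ and its analogues make the fitted denominators positive; the paper additionally treats the limiting case $b\to 0$ by explicit limits, which you bypass) and then establish diagonal dominance. Your row-sum bookkeeping is in fact sharper than the paper's: the exact cancellation of the $d$-terms against the cross-derivative pieces of $e^{i,j,k}_{i+1,j,k}$, $e^{i,j,k}_{i,j+1,k}$, $e^{i,j,k}_{i,j,k+1}$, and the telescoping of the fitted $b$-terms to $h_{x_i}^{-1}\bigl[x_{i-1/2}{b_1}_{i-1/2,j,k}-x_{i+1/2}{b_1}_{i+1/2,j,k}\bigr]$ (with analogues in $y$ and $z$) are exact identities, whereas the paper reaches the row sum through first-order Taylor approximations of $x_{i\pm1}^{\beta}$. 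Appealing to irreducibility of the seven-point stencil plus weak diagonal dominance (Berman--Plemmons) is also a cleaner way to close than the paper's bare assertion.

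The gap is in your final inference. By your own computation the row sum equals $-c_{i,j,k}$ plus three difference quotients which do \emph{not} vanish as $h\to 0$: they converge to $-\partial_x(xb_1)-\partial_y(yb_2)-\partial_z(zb_3)$ at the grid point. Uniform boundedness of these quotients (your Lipschitz assumption) therefore does not yield that the row sum ``is dominated by $-c_{i,j,k}>0$'' for small $h$; if $\partial_x(xb_1)+\partial_y(yb_2)+\partial_z(zb_3)>-c$ at some node, the row sum stays negative no matter how small $h$ is. What is actually needed is a zero-order condition of the type $c+\nabla\cdot\bigl(xb_1,\,yb_2,\,zb_3\bigr)\le 0$ — precisely the role of the ``threshold'' alluded to in the theorem's footnote — or some other argument showing the divergence term cannot outweigh $-c$. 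Note that the paper's own proof has the same soft spot: it asserts that $\left|e^{i,j,k}_{i,j,k}\right|$ minus the off-diagonal moduli tends to $-c_{i,j,k}$, silently discarding the $O(1)$ divergence contribution that your telescoping makes explicit. So your computation is the more transparent one, but as written the conclusion does not follow from bounded difference quotients alone, and you should either strengthen the hypothesis on $c$ or state the dominance condition explicitly.
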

{\it Proof}  
 \qed Let us show that $E(\tau,\alpha) $ has positive diagonals, non-positive off-diagonals, and is diagonally dominant.
 	We first note that
 	\begin{equation}
 	\begin{split}
 	& \dfrac{{b_1}_{i+1/2,j,k}(\tau,\alpha_{i,j,k})}{x_{i+1}^{\beta_{i,j,k}(\tau)}- x_{i}^{\beta_{i,j,k}(\tau)}} = \dfrac{\bar{a_1}_{i,j,k}(\tau,\alpha_{i,j,k})\,\beta_{i,j,k}(\tau) }{x_{i+1}^{\beta_{i,j,k}(\tau)}- x_{i}^{\beta_{i,j,k}(\tau)}} > 0,\\ 
 	&\dfrac{{b_2}_{i,j+1/2,k}(\tau,\alpha_{i,j,k})}{y_{j+1}^{{\beta_1}_{i,j,k}(\tau)}- y_{j}^{{\beta_1}_{i,j,k}(\tau)}} = \dfrac{\bar{a_2}_{i,j,k}(\tau,\alpha_{i,j,k})\,{\beta_1}_{i,j,k}(\tau) }{y_{j+1}^{{\beta_1}_{i,j,k}(\tau)}- y_{j}^{{\beta_1}_{i,j,k}(\tau)}} > 0,\\ 
 	& \dfrac{{b_3}_{i,j,k+1/2}(\tau,\alpha_{i,j,k})}{z_{k+1}^{{\beta_2}_{i,j,k}(\tau)}-z_{k}^{{\beta_2}_{i,j,k}(\tau)}} = \dfrac{\bar{a_3}_{i,j,k}(\tau,\alpha_{i,j,k})\,{\beta_2}_{i,j,k}(\tau) }{z_{k+1}^{{\beta_2}_{i,j,k}(\tau)}- z_{k}^{{\beta_2}_{i,j,k}(\tau)}} > 0, 
 	\end{split}
 	\end{equation}
 	for $ i = 1,\cdots, N_1-1 $, $ j = 1,\cdots, N_2-1 $, $ k = 1,\cdots, N_3-1 $ and all $ {b_1}_{i+1/2,j,k}(\tau,\alpha_{i,j,k}) \neq 0,\,\, {b_2}_{i,j+1/2,k}(\tau,\alpha_{i,j,k}) \neq 0, \,\,{b_3}_{i,j,k+1/2}(\tau,\alpha_{i,j,k}) \neq 0$  with $ \bar{a_1}_{i,j,k}(\tau,\alpha_{i,j,k}) > 0 $, 
 	
 	 $ \bar{a_2}_{i,j,k}(\tau,\alpha_{i,j,k}) > 0$ and $ \bar{a_3}_{i,j,k}(\tau,\alpha_{i,j,k}) >0 $.
 	
 	This also holds when $ {b_1}_{i+1/2,j,k}(\tau,\alpha_{i,j,k})\rightarrow 0 $,  $ {b_2}_{i,j+1/2,k}(\tau,\alpha_{i,j,k})\rightarrow 0 $
 	
 	 and $ {b_3}_{i,j,k+1/2}(\tau,\alpha_{i,j,k})\rightarrow 0 $. Indeed
 	\begin{equation*}
 	\begin{split}
 		\lim_{{b_1}_{i+1/2,j,k}(\tau,\alpha)\rightarrow 0}	\dfrac{{b_1}_{i+1/2,j,k}(\tau,\alpha)}{x_{i+1}^{\beta_{i,j,k}(\tau)}- x_{i}^{\beta_{i,j,k}(\tau)}} &= \dfrac{{b_1}_{i+1/2,j,k}(\tau,\alpha) }{e^{\beta_{i,j,k}(\tau)\ln(x_{i+1})}-e^{\beta_{i,j,k}(\tau)\ln(x_{i})}}\\
 		&= \dfrac{{b_1}_{i+1/2,j,k}(\tau,\alpha) }{\beta_{i,j,k}(\tau)\ln(x_{i+1})-\beta_{i,j,k}(\tau)\ln(x_{i})}\\
 		&= \bar{a_1}_{i+1/2,j,k}(\tau,\alpha)\ln \left( \dfrac{ x_{i+1}}{x_{i}}\right)^{-1} > 0,
 		\end{split}
 	\end{equation*} 
 	\begin{equation*}
 	\begin{split}
 		\,\lim_{{b_1}_{i-1/2,j,k}(\tau,\alpha)\rightarrow 0}	\dfrac{{b_1}_{i-1/2,j,k}(\tau)}{x_{i}^{\beta_{i-1,j,k}(\tau)}- x_{i-1}^{\beta_{i-1,j}(\tau)}} &= \dfrac{{b_1}_{i-1/2,j,k}(\tau,\alpha) }{e^{\beta_{i-1,j,k}(\tau)\ln(x_{i})}-e^{\beta_{i-1,j,k}(\tau)\ln(x_{i-1})}}\\
 		& = \dfrac{{b_1}_{i-1/2,j,k}(\tau,\alpha) }{\beta_{i-1,j,k}(\tau)\ln(x_{i})-\beta_{i-1,j}(\tau)\ln(x_{i-1})}\\
 		&= \bar{a_1}_{i-1/2,j,k}(\tau,\alpha) \ln \left( \dfrac{x_{i}}{x_{i-1}}\right)^{-1} > 0,
 		 \end{split}
 	\end{equation*} 
 	Indeed
 	\begin{equation*}
 	\begin{split}
 	&	\lim_{{b_2}_{i,j+1/2,k}(\tau,\alpha)\rightarrow 0}	\dfrac{{b_2}_{i,j+1/2,k}(\tau)}{y_{j+1}^{{\beta_1}_{i,j,k}(\tau)}- y_{j}^{{\beta_1}_{i,j,k}(\tau)}} >0,\\ 
 	&	\lim_{{b_2}_{i,j-1/2,k}(\tau,\alpha)\rightarrow 0}	\dfrac{{b_2}_{i,j-1/2,k}(\tau)}{y_{j}^{{\beta_1}_{i,j-1,k}(\tau)}- y_{j-1}^{{\beta_1}_{i,j-1,k}(\tau)}} >0,\\
 	&	\lim_{{b_3}_{i,j,k+1/2}(\tau,\alpha)\rightarrow 0}	\dfrac{{b_3}_{i,j,k+1/2}(\tau)}{z_{k+1}^{{\beta_2}_{i,j,k}(\tau)}- z_{k}^{{\beta_2}_{i,j,k}(\tau)}} >0,\\
 	&	\lim_{{b_3}_{i,j,k-1/2}(\tau,\alpha)\rightarrow 0}	\dfrac{{b_3}_{i,j,k-1/2}(\tau)}{z_{k}^{{\beta_1}_{i,j,k-1}(\tau)}- z_{k-1}^{{\beta_2}_{i,j,k-1}(\tau)}} >0.
 		\end{split}
 	\end{equation*}	  
 	Using the definition of  $E (\tau,\alpha) =\left(e_{i,j,k}^{i,j,k}\right) $,  $ i= 1,\cdots, N_1-1 $, \,\, $ j = 1,\cdots, N_2-1 $\,\, and\,\, $ k = 1,\cdots, N_3-1 $  given above, we see that
 	\begin{equation*}
 	\begin{split}
 	& e_{i,j,k}^{i,j,k} \geqslant 0,\,\,\,e_{i+1,j,k}^{i,j,k} \leqslant 0,\,\,\, e_{i-1,j,k}^{i,j,k} \leqslant 0\,\,,\,e_{i,j+1,k}^{i,j,k} \leqslant 0, \\ & e_{i,j-1,k}^{i,j,k}\leqslant 0,\,\,\,e_{i,j,k+1}^{i,j,k} \leqslant 0,\,\,\text{and}\,\, e_{i,j,k-1}^{i,j,k}\leqslant 0,
 	\end{split}
 	\end{equation*}
 	For $ i = 2,\cdots, N_1-1 $, $ j = 2,\cdots, N_2-1 $ and $ k = 2,\cdots, N_3-1 $, since 
 	$x_{i+1}^{\beta_{i,j,k}(\tau)} \approx x_{i}^{\beta_{i,j,k}(\tau)} + x_{i}^{\beta_{i,j,k}(\tau)-1}\,{\beta_{i,j,k}(\tau)}\,h_{x_i}$,\,\, $x_{i-1}^{\beta_{i-1,j,k}(\tau)} \approx x_{i}^{\beta_{i-1,j,k}(\tau)} - \\ x_{i}^{\beta_{i-1,j,k}(\tau)-1}\,{\beta_{i-1,j,k}(\tau)}\,h_{x_i}$ \,\, $y_{j+1}^{{\beta_1}_{i,j,k}(\tau)} \approx y_{j}^{{\beta_1}_{i,j,k}(\tau)} + y_{j}^{{\beta_1}_{i,j,k}(\tau)-1}\,{{\beta_1}_{i,j,k}(\tau)}\,h_{y_j}$,\\ $y_{j-1}^{{\beta_1}_{i,j-1,k}(\tau)} \approx y_{j}^{{\beta_1}_{i,j-1,k}(\tau)} - y_{j}^{{\beta_1}_{i,j-1,k}(\tau)-1}\,{{\beta_1}_{i,j-1,k}(\tau)}\,h_{y_j}$ \,\,\\
 	$z_{k+1}^{{\beta_2}_{i,j,k}(\tau)} \approx z_{k}^{{\beta_2}_{i,j,k}(\tau)} + z_{k}^{{\beta_2}_{i,j,k}(\tau)-1}\,{{\beta_2}_{i,j,k}(\tau)}\,h_{z_k}$\,\,and \\ $z_{k-1}^{{\beta_2}_{i,j,k-1}(\tau)} \approx z_{k}^{{\beta_2}_{i,j,k-1}(\tau)} - z_{k}^{{\beta_2}_{i,j,k-1}(\tau)-1}\,{{\beta_2}_{i,j,k-1}(\tau)}\,h_{z_k}$,\\
 	when $ h = \max \{h_{x_i},\, h_{y_j},\,\,h_{z_k}\} \longrightarrow 0 $,
 	\begin{equation*}
 	\begin{split}
 &\left|	e_{i,j,k}^{i,j,k}\right| - \left|e_{i-1,j,k}^{i,j,k} \right|- \left|e_{i,j-1,k}^{i,j,k} \right| - \left|e_{i,j,k-1}^{i,j,k} \right|- \left|e_{i+1,j,k}^{i,j,k}\right|-\left|e_{i,j+1,k}^{i,j,k} \right|-\left|e_{i,j,k}^{i,j,k+1}\right| \\
 &\rightarrow - c_{i,j,k}(\tau,\alpha_{i,j,k}),
 \end{split}
 	\end{equation*}
 	we have 
 	\begin{equation*}
 	\begin{split}
 	\left|e_{i,j,k}^{i,j,k}\right| & \geq \left|e_{i-1,j,k}^{i,j,k}\right|+\left|e_{i,j-1,k}^{i,j,k}\right|+\left|e_{i,j+1,k}^{i,j,k}\right|+\left|e_{i+1,j,k}^{i,j,k}\right|+\left|e_{i,j,k+1}^{i,j,k}\right|+\left|e_{i,j,k-1}^{i,j,k}\right| \\
 	& \geq \sum_{m=1}^{N_1-1} \sum_{n_1=1}^{N_2-1} \sum_{n_2=1}^{N_3-1} \left|e_{m,n_1,n_2}^{i,j,k}\right| ,\,\,m \neq i,\,\,n_1 \neq j, \,\,n_2 \neq k,
 	\end{split}
 	\end{equation*}
 	We also have similar inequalities when one of the indices $i,j,k$ is equal to $1$. 
 	Therefore $ \,E(\tau,\alpha) $ is an ${M} $-matrix.
	
\textcolor{blue}{\section{Fitted finite volume scheme in $ n $ dimensional spatial domain}
The goal here is to update our  three dimension fitted schemes in  high dimensional space ($ n \geq 3 $).
Recall that the HJB equation in $ n\geq 1 $ dimensional space is given by
	\begin{align}\label{generalize}
	\begin{cases}
	v_t(t, x) + \underset{\alpha \in \mathcal{A}}{\sup} \left[L^{\alpha} v(t, x) + f(t, x, \alpha)\right] = 0 \quad\text{on} \,\,   [0,T]\times\mathbb{R}^n ,\\
v(T,x) = g(x), \,\,\,\,x \,\in \mathbb{R}^n
	\end{cases}
	\end{align}
\begin{align*}
\text{where}\,\,\,L^\alpha\,v(t, x) = \dfrac{1}{2}\,\sum_{i,j = 1}^n (\sigma \sigma^T)_{i,j} (t, x, \alpha)\dfrac{\partial^2 v(t, x)}{\partial x_i\,\partial x_j}+ \sum_{i = 1}^n b_i(t, x,\alpha)\dfrac{\partial v(t, x)}{ \partial x_i}.
\end{align*}The divergence form of equation \eqref{generalize}
 by setting $ \tau = T-t $ is given by
\begin{align} \label{vIe1}
-\dfrac{\partial v(\tau, x) }{\partial \tau} + \sup_{\alpha \in \mathcal{A}}\left[ \nabla\cdot \left( k (v(\tau, x))\right) + c(\tau, x, \alpha)\,v(\tau, x) \right] = 0,
\end{align}
where $k(v(\tau, x)) = A(\tau, x,\alpha)\nabla v(\tau, x)+ b(\tau, x, \alpha)\,v(\tau, x)$\,  with   
\begin{eqnarray}
\label{matrixA}
b = (x_1\,b_1, x_2\,b_2, x_3\,b_3,\cdots, x_n\,b_n)^T,\quad
A=\left[ \begin{array}{ccccc}
a_{11} & a_{12} & a_{13}& \cdots  & a_{1n} \\
a_{21} & a_{22}& a_{23} & \cdots  & a_{2n}  \\
a_{31} & a_{32}& a_{33} & \cdots  & a_{3n}\\
\vdots & \vdots  & \vdots & \vdots  & \vdots\\
a_{n1} & a_{n2}& a_{n3} & \cdots  & a_{nn}
\end{array} \right].
\end{eqnarray}
Indeed this  divergence form is not a restriction as the differentiation is respect to $x$ 
and not respect to the control  $\alpha$, which may be discontinuous in some applications. We will assume that for $ i\neq r,\,\,a_{ir} = a_{ri},\,\, \,\,r,i =1,\cdots,n$.  We also define the following coefficients, which will help us to build our scheme smoothly 
\begin{align*} 
& a_{ii}(\tau, x, \alpha) = \overline{a_i}(\tau, x, \alpha)\,x_i^2\,\, 
 \text{and}\,\,  a_{ir} = a_{ri} = d_{ir}(\tau, x, \alpha) \prod_{i=1}^{n} x_i=d_{ri}(\tau, x, \alpha) \prod_{i=1}^{n} x_i,\,\,\,r\neq i, 
\end{align*}
 $i,r=1,\cdots,n. $
 As usual the $ n $ dimensional domain is truncated to $ I_{x_i}= [0, {x_i}_{\text{max}}] $,\, $ i = 1,\cdots,n $  be divided into $ N_i $ sub-intervals
 $$  I_{{x_1}_j} =({x_1}_j, {x_1}_{j+1}),\,\, I_{{x_2}_k} =({x_2}_k, {x_2}_{k+1}) ,\,\, I_{{x_3}_l} =({x_3}_l, {x_3}_{l+1}),\cdots,I_{{x_n}_m} =({x_n}_m, {x_n}_{m+1})$$
 $ j = 0\cdots N_1-1,\,\,k = 0\cdots N_2-1,\,\,\,l=0\cdots N_3-1,\cdots, m=0\cdots N_n-1,$
 with $ 0 = {x_i}_{0} < {x_i}_{1}< \cdots \cdots< {x_i}_{p} = {x_i}_{\text{max}},\,$.
 This defines on $ I_{x} = \overset{n}{\underset{i=1}{\prod}} I_{x_i} $  a rectangular  mesh.
 By setting 
 \begin{equation}
 	\begin{split}
 & {x_1}_{j+1/2} :=\dfrac{{x_1}_{j} + {x_1}_{j+1} }{2},\qquad {x_1}_{j-1/2} :=\dfrac{{x_1}_{j} + {x_1}_{j-1} }{2},\\  
  & {x_2}_{k+1/2} :=\dfrac{{x_2}_{k} + {x_2}_{k+1} }{2},\qquad {x_2}_{k-1/2} :=\dfrac{{x_2}_{k} + {x_2}_{k-1} }{2},\\
  & {x_3}_{l+1/2} :=\dfrac{{x_3}_{l} + {x_3}_{l+1} }{2},\,\, \,\qquad {x_3}_{l-1/2} :=\dfrac{{x_3}_{l} + {x_3}_{l-1} }{2},\\
  & \qquad \vdots \qquad\qquad\qquad\qquad \qquad\qquad\quad\vdots\\
  & {x_n}_{m+1/2} :=\dfrac{{x_n}_{m} + {x_n}_{m+1} }{2},\,\,\,\, {x_n}_{m-1/2} :=\dfrac{{x_n}_{m} + {x_n}_{m-1} }{2},\\
 \end{split}
 \end{equation}
 for each $ j=1\cdots N_1-1$,\,\, $ k=1\cdots N_2-1$, $ l=1\cdots N_3-1,\cdots, m=1\cdots N_n-1 $. These mid-points form a second partition of $ I_{x} = \overset{n}{\underset{i=1}{\prod}} I_{x_i} $ if we define $ {x_i}_{-1/2} = {x_i}_{0}$,\, $ {x_i}_{N_i+1/2} = {x_i}_{\text{max}}$,\,\,\,$ i=1,2,\cdots, n $. For each $j = 0, 1, \cdots ,N_1 $,\,\, $k = 0, 1, \cdots ,N_2 $, $l = 0, 1,\cdots, N_3,\cdots, m = 0, 1,\cdots,N_n $ \, we put\,  $h_{{x_1}_j} = {x_1}_{j+1/2} - {x_1}_{j-1/2} $, $h_{{x_2}_k} = {x_2}_{k+1/2} - {x_2}_{k-1/2} $, \, $h_{{x_3}_l} = {x_3}_{l+1/2} - {x_3}_{l-1/2} $, $\cdots, h_{{x_n}_m} = {x_n}_{m+1/2} - {x_n}_{m-1/2} $  and\\ $ h = \max \{h_{{x_1}_j},\,h_{{x_2}_k},\,h_{{x_3}_l},\cdots, h_{{x_n}_m}\} $.
 Integrating both size of (\ref{generalize}) over $ \mathcal{R}_{j,k,l,\cdots,m} = \left[{x_1}_{j-1/2}, {x_1}_{j+1/2} \right] \times \left[{x_2}_{k-1/2}, {x_2}_{k+1/2} \right]\times \left[{x_3}_{l-1/2}, {x_3}_{l+1/2} \right]\times \cdots\times \left[{x_n}_{m-1/2}, {x_n}_{m+1/2} \right] $ we have 
 \begin{equation}
 \begin{split}
 &- \int_{\mathcal{R}_{j,k,l,\cdots,m}}  \dfrac{ \partial v }{\partial \tau}\, dx_1\,dx_2\, dx_3\cdots dx_n \\
 & + \int_{\mathcal{R}_{j,k,l,\cdots,m}} \sup_{\alpha \in \mathcal{A}} \left[ \nabla \cdot \left( k(v)\right) + c\,v \right]\, dx_1\,dx_2\, dx_3\cdots dx_n = 0,
 \end{split}
 \end{equation}
 for $ j =1,2,\cdots N_1-1 $,\, $ k =1,2,\cdots N_2-1 $,\, $l =1,2,\cdots N_3-1,\cdots,m =1,2,\cdots N_n-1 $.\\
 Applying the mid-points quadrature rule to the first and the last point terms, we obtain the above
 \begin{equation}\label{Bjr1}
 \begin{split}
 &-\dfrac{d v_{j,k,l,\cdots,m}(\tau) }{d \tau}\,l_{j,k,l,\cdots,m}+ \\
 & \sup_{\alpha \in \mathcal{A}} \left[ \int_{ \mathcal{R}_{j,k,l,\cdots,m}}\nabla \cdot \left( k(v)\right)\,dx_1\,dx_2\cdots dx_n + c_{j,k,l,\cdots,m}(\tau, \alpha)\, v_{j,k,l,\cdots,m}(\tau)\,l_{j,k,l,\cdots,m}\right] =0
 \end{split}
 \end{equation}
where $ l_{j,k,l,\cdots,m} = \left( {x_1}_{j+1/2} - {x_1}_{j-1/2} \right) \times \left({x_2}_{k+1/2} - {x_2}_{k-1/2}\right) \times \left({x_3}_{l+1/2} - {x_3}_{l-1/2}\right)\times\cdots\times \left({x_n}_{m+1/2} - {x_n}_{m-1/2}\right) $ is the  volume of $ \mathcal{R}_{j,k,l,\cdots,m} $. Note that $ v_{j,k,l,\cdots,m}(\tau) $ denotes the nodal approximation to $ v(\tau, {x_1}_{j}, {x_2}_{k}, {x_3}_{l},\cdots,{x_n}_{m}) $ at each point of the grid.\\
 We now consider the approximation of the middle term in \eqref{Bjr1}. Let $\bf n $ denote the unit vector outward-normal to $ \partial \mathcal{R}_{j,k,l,\cdots,m} $. By General Stokes Theorem, integrating by parts and using the definition
 of flux $ k $, we have
 \begin{equation}\label{Vol}
 \begin{split}
 & \int_{\mathcal{R}_{j,k,l,\cdots,m}} \nabla \cdot \left( k(v)\right) d x_1\,dx_2\,dx_3\cdots dx_n \\ \nonumber
 & = \int_{\partial \mathcal{R}_{j,k,l,\cdots,m}} k(v) \cdot \bf n \, ds \\ 
 &= \int_{\left({x_1}_{j+1/2},{x_2}_{k-1/2},{x_3}_{l-1/2},\cdots,{x_n}_{m-1/2} \right)}^{\left({x_1}_{j+1/2},{x_2}_{k+1/2},{x_3}_{l+1/2},\cdots,{x_n}_{m+1/2} \right)}\left(\sum_{i = 1}^{n}a_{1i}\,\dfrac{\partial v }{\partial x_i}+ x_1\,b_1\,v \right)dx_2\,dx_3\cdots dx_n  \\ 
 &- \int_{\left({x_1}_{j-1/2},{x_2}_{k-1/2},{x_3}_{l-1/2},\cdots,{x_n}_{m-1/2} \right)}^{\left({x_1}_{j-1/2},{x_2}_{k+1/2},{x_3}_{l+1/2},\cdots,{x_n}_{m+1/2} \right)} \left(\sum_{i = 1}^{n}a_{1i}\,\dfrac{ \partial v }{\partial x_i}+ x_1\,b_1\,v \right)dx_2\,dx_3\cdots dx_n  \\ 
 & +\int_{\left({x_1}_{j-1/2},{x_2}_{k+1/2},{x_3}_{l-1/2},\cdots,{x_n}_{m-1/2} \right)}^{\left({x_1}_{j+1/2},{x_2}_{k+1/2},{x_3}_{l+1/2},\cdots,{x_n}_{m+1/2} \right)} \left(\sum_{i=1}^{n}a_{2i}\,\dfrac{\partial v }{ \partial x_i} + x_2\,b_2\,v \right)dx_1\,dx_3\cdots dx_n  \\ 
 &- \int_{\left({x_1}_{j-1/2},{x_2}_{k-1/2},{x_3}_{l-1/2},\cdots,{x_n}_{m-1/2} \right)}^{\left({x_1}_{j+1/2},{x_2}_{k-1/2},{x_3}_{l+1/2},\cdots,{x_n}_{m+1/2} \right)} \left(\sum_{i=1}^{n}a_{2i}\,\dfrac{\partial  v }{\partial x_i}+  x_2\,b_2\,v \right)dx_1\,dx_3\cdots dx_n  \\ 
 & \quad\vdots \quad\quad\quad\quad\vdots \quad\quad\quad\quad\vdots\quad\quad\quad\vdots\quad\quad\quad\vdots\quad\quad\quad\vdots\quad\quad\quad\vdots\quad\quad\quad\vdots\quad\quad\quad\vdots\quad\quad\quad\vdots\quad\quad\quad\vdots\\
 &+ \int_{\left({x_1}_{j-1/2},{x_2}_{k-1/2},{x_3}_{l-1/2},\cdots,{x_n}_{m+1/2} \right)}^{\left({x_1}_{j+1/2},{x_2}_{k+1/2},{x_3}_{l+1/2},\cdots,{x_n}_{m+1/2} \right)}  \left(\sum_{i=1}^{n}a_{ni}\,\dfrac{\partial v }{\partial x_i} + x_n\,b_n\,v \right)dx_1\,dx_2\,dx_3\cdots dx_{n-1}\\
 & - \int_{\left({x_1}_{j-1/2},{x_2}_{k-1/2},{x_3}_{l-1/2},\cdots,{x_n}_{m-1/2} \right)}^{\left({x_1}_{j+1/2},{x_2}_{k+1/2},{x_3}_{l+1/2},\cdots,{x_n}_{m-1/2} \right)} \left(\sum_{i=1}^{n}a_{ni}\,\dfrac{\partial v }{\partial x_i} + x_n\,b_n\,v \right)dx_1\,dx_2\,dx_3\cdots dx_{n-1}\\
 & = \sum_{i =1}^{n}\bigg(\int_{\left({x_1}_{j-1/2},{x_2}_{k-1/2},{x_3}_{l-1/2},\cdots,{x_i}_{q+1/2},\cdots,{x_n}_{m-1/2} \right)}^{\left({x_1}_{j+1/2},{x_2}_{k+1/2},{x_3}_{l+1/2},\cdots,{x_i}_{q+1/2},\cdots,{x_n}_{m+1/2} \right)} \left( \sum_{r =1}^{n} a_{ir}\,\dfrac{\partial v }{\partial x_r} + x_i\,b_i\,v \right)\prod_{i\neq r}^{n}dx_r\bigg)\\
 & - \sum_{i =1}^{n}\bigg(\int_{\left({x_1}_{j-1/2},{x_2}_{k-1/2},{x_3}_{l-1/2},\cdots,{x_i}_{q-1/2},\cdots,{x_n}_{m-1/2} \right)}^{\left({x_1}_{j+1/2},{x_2}_{k+1/2},{x_3}_{l+1/2},\cdots,{x_i}_{q-1/2},\cdots,{x_n}_{m+1/2} \right)}\left(\sum_{r =1}^{n}  a_{ir}\,\dfrac{\partial v }{\partial x_r} + x_i\,b_i\,v \right)\prod_{i\neq r}^{n}dx_r\bigg)
 \end{split} 
 \end{equation} 
 We will approximate the first term using the the mid-points quadrature rule as 
 \begin{equation}
 \begin{split}
 & \sum_{i =1}^{n}\bigg(\int_{\left({x_1}_{j-1/2},{x_2}_{k-1/2},{x_3}_{l-1/2},{x_i}_{q+1/2},\cdots,{x_n}_{m-1/2} \right)}^{\left({x_1}_{j+1/2},{x_2}_{k+1/2},{x_3}_{l+1/2},{x_i}_{q+1/2},\cdots,{x_n}_{m+1/2} \right)} \left( \sum_{r =1}^{n} a_{ir}\,\dfrac{\partial v }{\partial x_r} + x_i\,b_i\,v \right)\prod_{i\neq r}^{n}dx_r\bigg)\\& \approx \sum_{i,r =1}^{n}\left(a_{ir}\,\dfrac{\partial v }{\partial x_r} + x_i\,b_i\,v \right)\bigg|_{\left({x_1}_{j},{x_2}_{k},{x_3}_{l},\cdots,{x_i}_{q+1/2},{x_i}_{s},\cdots,{x_n}_{m}\right)} \prod_{i\neq r}^{n}h_{{x_r}_\nu}.
 \end{split}
 \end{equation} where the value of the subscript $ \nu \in \{j,k,l,\cdots,q,s,\cdots,m\} $  depends respectively of the value taking by $  r \in \{1,2,3,\cdots,i, \cdots,n\}  $.
 To achieve this, it is clear that we now need to derive approximations of the  $ k(v) \cdot \bf n $ defined above at the mid-point \\$ \left({x_1}_{j},{x_2}_{k},{x_3}_{l},\cdots,{x_i}_{q+1/2},{x_{i+1}}_{s},\cdots,{x_n}_{m}\right) $, of the interval $ I_{{x_i}_q} $ for $ q = 0, 1,\cdots N_i-1$,\, $i=1,2,\cdots,n $. This discussion is divided into two cases for $ q \geq 1 $, and $ q = 0\, $ on the interval $ I_{{x_i}_0} = [0, {x_i}_1],\,\, i=1,2,\cdots,n  $. This is really the  generalization of the fitted finite scheme.\\ 
 \textbf{\underline{Case I}:} For $ q\geq 1 $.\\
 We follow the same procedure as in three dimension and  have the following generalization
 	\begin{equation}\label{Vol1}
 	\begin{split}
 	&\sum_{i,r =1}^{n}\left(a_{ir}\,\dfrac{\partial v }{ \partial x_r} + x_i\,b_i\,v \right)\bigg|_{\left({x_1}_{j},{x_2}_{k},{x_3}_{l},\cdots,{x_i}_{q+1/2},{x_{i+1}}_{s},\cdots,{x_n}_{m}\right)}    \prod_{i\neq r}^{n} h_{{x_r}_\nu}\\
 	& \approx \sum_{i=1}^{n}{x_i}_{q+1/2} {b_i}_{j,k,l,\cdots,q+1/2,s\cdots,m}(\tau, \alpha_{j,k,l,\cdots,q,s,\cdots,m})\times\\
 	&\dfrac{\left({x_i}_{q+1}^{\beta_{j,k,l,\cdots,q,s\cdots,m}(\tau)}\,v_{j,k,l,\cdots,q+1,s,\cdots,m}-{x_i}_{q}^{\beta_{j,k,l,\cdots,q,s\cdots,m}(\tau)}\,v_{j,k,l,\cdots,q,s,\cdots,m} \right)}{{x_i}_{q+1}^{\beta_{j,k,l,\cdots,q,s\cdots,m}(\tau)}- {x_i}_{q}^{\beta_{j,k,l,\cdots,q,s\cdots,m}(\tau)}}\prod_{i\neq r}^{n} h_{{x_r}_\nu}\\ 
 	&+\sum_{\underset{i\neq r}{i,r=1}}^n{x_i}_{q+1/2}\left( {d_{ir}}_{j,k,l,\cdots,q,s,\cdots,m}(\tau,\alpha_{j,k,l,\cdots,q,s,\cdots,m})\left(\prod_{i\neq r}{x_r}_\nu\right)\times\right.\\
 	&\left.\dfrac{v_{j,k,l,\cdots,q,s,\nu+1,\cdots,m}-v_{j,k,l,\cdots,q,s,\nu,\cdots,m}}{ h_{{x_r}_\nu}}  \right)\prod_{i\neq r}^{n} h_{{x_r}_\nu}.
 	\end{split}
 	\end{equation}
 	where the value of the subscript $ \nu \in \{j,k,l,\cdots,q,s,\cdots,m\} $  depends respectively of the value taking by $ r \in \{1,2,3,\cdots,i,\cdots,n\} $,\\ $ \beta_{j,k,l,\cdots,q,s,\nu\cdots,m}(\tau) = \dfrac{{b_i}_{j,k,l,\cdots,q+1/2,s,r\cdots,m}(\tau, \alpha_{j,k,l,\cdots,q,s,\nu,\cdots,m})}{{{\overline{a}_i}}_{j,k,l,\cdots,q+1/2,s,\nu\cdots,m}(\tau, \alpha_{j,k,l,\cdots,q,s,\nu,\cdots,m})}\neq 0.$ Similarly 
 	\begin{equation}\label{Vol2}
 	\begin{split}
 	&\sum_{i,r =1}^{n}\left(a_{ir}\,\dfrac{\partial  v }{\partial x_r} + x_i\,b_i\,v \right)\bigg|_{\left({x_1}_{j},{x_2}_{k},{x_3}_{l},\cdots,{x_i}_{q-1/2},{x_{i+1}}_{s},\cdots,{x_n}_{m}\right)}    \prod_{i\neq r}^{n}dx_r\\
 	& \approx \sum_{i=1}^{n}{x_i}_{q-1/2} {b_i}_{j,k,l,\cdots,q-1/2,s\cdots,m}(\tau, \alpha_{j,k,l,\cdots,q,s,\cdots,m})\times\left(\prod_{i\neq r}^{n} h_{{x_r}_\nu}\right)\times\\
 	&\dfrac{\left({x_i}_{q}^{\beta_{j,k,l,\cdots,q-1,s\cdots,m}(\tau)}\,v_{j,k,l,\cdots,q,s,\cdots,m}-{x_i}_{q-1}^{\beta_{j,k,l,\cdots,q-1,s\cdots,m}(\tau)}\,v_{j,k,l,\cdots,q-1,s,\cdots,m} \right)}{{x_i}_{q}^{\beta_{j,k,l,\cdots,q-1,s\cdots,m}(\tau)}- {x_i}_{q-1}^{\beta_{j,k,l,\cdots,q-1,s\cdots,m}(\tau)}}\\ 
 	&+\sum_{\underset{i\neq r}{i,r=1}}^n{x_i}_{q-1/2}\left( {d_{ir}}_{j,k,l,\cdots,q,s,\cdots,m}(\tau,\alpha_{j,k,l,\cdots,q,s,\cdots,m})\left(\prod_{i\neq r}{x_r}_\nu\right)\times\right.\\
 	&\left.\dfrac{v_{j,k,l,\cdots,q,s,\nu+1,\cdots,m}-v_{j,k,l,\cdots,q,s,\nu,\cdots,m}}{ h_{{x_r}_\nu}}  \right)\prod_{i\neq r}^{n} h_{{x_r}_\nu}.
 	\end{split}
 	\end{equation}
 	where  $ \beta_{j,k,l,\cdots,q-1,s,\nu\cdots,m}(\tau) = \dfrac{{b_i}_{j,k,l,\cdots,q-1/2,s,\nu,\cdots,m}(\tau, \alpha_{j,k,l,\cdots,q,s,\nu,\cdots,m})}{{{\overline{a}_i}}_{j,k,l,\cdots,q-1/2,s,\nu,\cdots,m}(\tau, \alpha_{j,k,l,\cdots,q,s,\nu,\cdots,m})}\neq 0.$\\ 	
 	\textbf{\underline{Case II:}} Approximation of the flux at $ q = 0 $ on the interval $ I_{{x_i}_0} = [0, {x_i}_1],\,\,i=1,\cdots,n$.
 	Note that the analysis in case I does not apply to the approximation of the flux on $ I_{{x_i}_0} $ because it is the degenerated zone. Follow the same lines as for  the three dimensional case, we get
 	\begin{equation}
 	\begin{split}
 	&\sum_{i,r =1}^{n}\left(a_{ir}\,\dfrac{\partial v }{\partial x_r} + x_i\,b_i\,v \right)\bigg|_{\left({x_1}_{j},{x_2}_{k},{x_3}_{l},\cdots,{x_i}_{1/2},{x_i}_{s},\cdots,{x_n}_{m}\right)} \prod_{i\neq r}^{n}dx_r\\
 	& \approx\sum_{i=1}^n {x_i}_{1/2} \dfrac{1}{2}\bigg( \bigg({a_i}_{j,k,l,\cdots,1/2,s\cdots,m}(\tau, \alpha_{j,k,l,\cdots,1,s,\cdots,m})\\
 	&+{b_i}_{j,k,l,\cdots,1/2,s\cdots,m}(\tau, \alpha_{j,k,l,\cdots,1,s,\cdots,m})\bigg)\,v_{j,k,l,\cdots,1,s,\cdots,m} \prod_{i\neq r}^{n} h_{{x_r}_\nu}\bigg) \\  
 	&  -\sum_{i=1}^n {x_i}_{1/2}\dfrac{1}{2}\bigg(\bigg({a_i}_{j,k,l,\cdots,1/2,s\cdots,m}(\tau, \alpha_{j,k,l,\cdots,1,s,\cdots,m})\\
 	&-{b_i}_{j,k,l,\cdots,1/2,s\cdots,m}(\tau, \alpha_{j,k,l,\cdots,1,s,\cdots,m})\bigg)\,v_{j,k,l,\cdots,0,s,\cdots,m}\prod_{i\neq r}^{n} h_{{x_r}_\nu} \bigg)\\ 
 	&+ \sum_{\underset{i\neq r}{i,r=1}}^n{x_i}_{1/2}\left( {d_{ir}}_{j,k,l,\cdots,1,s,\cdots,m}(\tau,\alpha_{j,k,l,\cdots,1,s,\cdots,m})\left(\prod_{i\neq r}{x_r}_\nu\right)\times\right.\\
 	&\left.\dfrac{v_{j,k,l,\cdots,1,s,\nu+1,\cdots,m}-v_{j,k,l,\cdots,1,s,\nu,\cdots,m}}{ h_{{x_r}_\nu}}  \right)\prod_{i\neq r}^{n} h_{{x_r}_\nu}.
 	\end{split}
 	\end{equation}
 	Equation (\ref{Bjr1}) becomes by replacing the flux by its value for $ j = 1,\cdots,N_1-1 $, \,$ k = 1,\cdots,N_2-1 $,  \, $ l = 1,\cdots,N_3-1,\cdots, m = 1,\cdots,N_n-1, $
	 and $ N = \prod_{i=1}^{n} (N_i-1)$.
 	\begin{equation}
 	\begin{split}
 	&-\dfrac{d\, v_{j,k,l,\cdots,q,s,\nu,\cdots,m}(\tau)(\tau)}{d\, \tau}+\dfrac{1}{l_{j,k,l,\cdots,q,s,\cdots,m}} \times\\ 
 	& \underset{\alpha_{j,k,l,\cdots,q,s,\cdots,m} \in \mathcal{A}^{N}}{\sup} \bigg[\sum_{i=1}^{n}{x_i}_{q+1/2} {b_i}_{j,k,l,\cdots,q+1/2,s\cdots,m}(\tau, \alpha_{j,k,l,\cdots,q,s,\cdots,m})\times\\
 	&\dfrac{\left({x_i}_{q+1}^{\beta_{j,k,l,\cdots,q,s\cdots,m}(\tau)}\,v_{j,k,l,\cdots,q+1,s,\cdots,m}-{x_i}_{q}^{\beta_{j,k,l,\cdots,q,s\cdots,m}(\tau)}\,v_{j,k,l,\cdots,q,s,\cdots,m} \right)}{{x_i}_{q+1}^{\beta_{j,k,l,\cdots,q,s\cdots,m}(\tau)}- {x_i}_{q}^{\beta_{j,k,l,\cdots,q,s\cdots,m}(\tau)}}\prod_{i\neq r}^{n} h_{{x_r}_\nu}\\ 
 	&+\sum_{\underset{i\neq r}{i,r=1}}^n{x_i}_{q+1/2}\bigg( {d_{ir}}_{j,k,l,\cdots,q,s,\cdots,m}(\tau,\alpha_{j,k,l,\cdots,q,s,\cdots,m})\left(\prod_{i\neq r}{x_r}_\nu\right)\times\\
 	&\dfrac{v_{j,k,l,\cdots,q,s,\nu+1,\cdots,m}-v_{j,k,l,\cdots,q,s,\nu,\cdots,m}}{ h_{{x_r}_\nu}}  \bigg)\prod_{i\neq r}^{n} h_{{x_r}_\nu}\\
 	& - \sum_{i=1}^{n}{x_i}_{q-1/2} {b_i}_{j,k,l,\cdots,q-1/2,s\cdots,m}(\tau, \alpha_{j,k,l,\cdots,q,s,\cdots,m})\times\left(\prod_{i\neq r}^{n} h_{{x_r}_\nu}\right)\times\\
 	&\dfrac{\left({x_i}_{q}^{\beta_{j,k,l,\cdots,q-1,s\cdots,m}(\tau)}\,v_{j,k,l,\cdots,q,s,\cdots,m}-{x_i}_{q-1}^{\beta_{j,k,l,\cdots,q-1,s\cdots,m}(\tau)}\,v_{j,k,l,\cdots,q-1,s,\cdots,m} \right)}{{x_i}_{q}^{\beta_{j,k,l,\cdots,q-1,s\cdots,m}(\tau)}- {x_i}_{q-1}^{\beta_{j,k,l,\cdots,q-1,s\cdots,m}(\tau)}}\\ 
 	&+\sum_{\underset{i\neq r}{i,r=1}}^n{x_i}_{q-1/2}\bigg( {d_{ir}}_{j,k,l,\cdots,q,s,\cdots,m}(\tau,\alpha_{j,k,l,\cdots,q,s,\cdots,m})\left(\prod_{i\neq r}{x_r}_\nu\right)\times\\
 	&\dfrac{v_{j,k,l,\cdots,q,s,\nu+1,\cdots,m}-v_{j,k,l,\cdots,q,s,\nu,\cdots,m}}{ h_{{x_r}_\nu}}  \bigg)\prod_{i\neq r}^{n} h_{{x_r}_\nu}\\
 	& + c_{j,k,l,\cdots,q,s,\nu,\cdots,m}\,v_{j,k,l,\cdots,q,s,\nu,\cdots,m}\,l_{j,k,l,\cdots,q,s,\nu,\cdots,m}\bigg]=0.
 	\end{split} 
 	\end{equation}	
 	This can be rewritten as the Ordinary Differential Equation (ODE) coupled with optimization
 			\begin{equation}\label{stem}
 			\begin{cases}
 			\dfrac{d\,\textbf{v}(\tau)}{d\,\tau} + \underset{\alpha \in \mathcal{A}^N}{\inf}\,\left[E (\tau, \alpha)\,\textbf{v}(\tau) + F(\tau,\alpha) \right] = 0, \\
 			~~~\mbox{with}~~ \,\,\,\,  \textbf{v}(0)\,\,\,\text{given},
 			\end{cases}
 			\end{equation} 
 			where  $  E (\tau,\alpha)$  is an $N\times N$ matrix,  \,$ \mathcal{A}^N=\underset{N}{\underbrace{\mathcal{A}\times\cdots\times\mathcal{A}}} $,\, $G (\tau, \alpha) =-F(\tau, \alpha) $ depends of  the boundary condition  and the term $c$.
 			$\textbf{v}= \left( v_{j,k,l,\cdots,q,s,\nu,\cdots,m}\right) $, 
 			\,\, and  \,\, $G (\tau, \alpha) =-F(\tau, \alpha) $.
 			By setting
 			$n_1=N_1-1,\; n_2=N_2-1; \;n_3=N_3-1;,\cdots,n_n=N_n-1;, \;\; I:=I (j,k,l,\cdots,q,s,\nu,\cdots,m)= j + (k-1)n_1 +(l-1)n_1 n_2 + \cdots + (m-1)\overset{n-1}{\underset{i=1}{\prod}}n_i$ and  $J:=J (j',k',l',\cdots,q',s',\nu',\cdots,m')=  j' + (k'-1)n_1 +(l'-1)n_1 n_2 + \cdots + (m'-1)\overset{n-1}{\underset{i=1}{\prod}}n_i $, we have 
 			$E (\tau, \alpha) (I,J)= \left(e_{j',k',l'\cdots,q',s',\nu',\cdots,m'}^{j,k,l,\cdots,q,s,\nu,\cdots,m}\right) $,  $j', j = 1,\cdots, N_1-1 $, \,\, $ k',k = 1,\cdots, N_2-1 $\,\, and\,\, $l', l = 1,\cdots, N_3-1,\cdots,m',m = 1,\cdots, N_n-1 $ and
 			\begin{equation}
 			\begin{split}
 			&e_{j,k,l\cdots,q,s,r,\cdots,m}^{j,k,l,\cdots,q,s,\nu,\cdots,m}\\
 			&=\dfrac{1}{l_{j,k,l,\cdots,q,s,\cdots,m}} \bigg[\sum_{i=1}^{n}{x_i}_{q+1/2} {b_i}_{j,k,l,\cdots,q+1/2,s\cdots,m}(\tau, \alpha_{j,k,l,\cdots,q,s,\cdots,m})\times\\
 			&\dfrac{{x_i}_{q}^{\beta_{j,k,l,\cdots,q,s\cdots,m}(\tau)} }{{x_i}_{q+1}^{\beta_{j,k,l,\cdots,q,s\cdots,m}(\tau)}- {x_i}_{q}^{\beta_{j,k,l,\cdots,q,s\cdots,m}(\tau)}}\prod_{i\neq r}^{n} h_{{x_r}_\nu}\\ 
 			&+\sum_{\underset{i\neq r}{i,r=1}}^n\bigg( {d_{ir}}_{j,k,l,\cdots,q,s,\cdots,m}(\tau,\alpha_{j,k,l,\cdots,q,s,\cdots,m})\left(\prod_{i\neq r}{x_r}_\nu\right)\dfrac{1}{h_{{x_r}_\nu}}
 			  \bigg)\prod_{i=1}^{n} h_{{x_i}_q}\\
 			&+\sum_{i=1}^{n}{x_i}_{q-1/2} {b_i}_{j,k,l,\cdots,q-1/2,s\cdots,m}(\tau, \alpha_{j,k,l,\cdots,q,s,\cdots,m})\times\left(\prod_{i\neq r}^{n} h_{{x_r}_\nu}\right)\times\\
 			&\dfrac{{x_i}_{q}^{\beta_{j,k,l,\cdots,q-1,s\cdots,m}(\tau)}}{{x_i}_{q}^{\beta_{j,k,l,\cdots,q-1,s\cdots,m}(\tau)}- {x_i}_{q-1}^{\beta_{j,k,l,\cdots,q-1,s\cdots,m}(\tau)}}\bigg]
 			- c_{j,k,l,\cdots,q,s,\nu,\cdots,m},\\
 			& \vspace{0.5cm}\\
 			&\sum_{\overset{j'\neq j}{j'=j-1}}^{N_1-1} \sum_{\overset{k'\neq k}{k'=k-1}}^{N_2-1} \sum_{\overset{l'\neq l}{l'=l-1}}^{N_3-1}\cdots \sum_{\overset{m'\neq m}{m'=m-1}}^{N_n-1} e_{j',k',l'\cdots,q',s',\nu',\cdots,m'}^{j,k,l,\cdots,q,s,r,\cdots,m}\\
 			& =\dfrac{1}{l_{j,k,l,\cdots,q,s,\cdots,m}} \bigg[
 			 \sum_{i=1}^{n}{x_i}_{q-1/2} {b_i}_{j,k,l,\cdots,q-1/2,s\cdots,m}(\tau, \alpha_{j,k,l,\cdots,q,s,\cdots,m})\times\\
 			&\dfrac{-{x_i}_{q-1}^{\beta_{j,k,l,\cdots,q-1,s\cdots,m}(\tau)}}{{x_i}_{q}^{\beta_{j,k,l,\cdots,q-1,s\cdots,m}(\tau)}- {x_i}_{q-1}^{\beta_{j,k,l,\cdots,q-1,s\cdots,m}(\tau)}}\left(\prod_{i\neq r}^{n} h_{{x_r}_\nu}\right)\bigg],\\
 			& \vspace{0.5cm}\\
 			&\sum_{\overset{j'\neq j}{j'=j+1}}^{N_1-1} \sum_{\overset{k'\neq k}{k'=k+1}}^{N_2-1} \sum_{\overset{l'\neq l}{l'=l+1}}^{N_3-1}\cdots \sum_{\overset{m'\neq m}{m'=m+1}}^{N_n-1} e_{j',k',l'\cdots,q',s',\nu',\cdots,m'}^{j,k,l,\cdots,q,s,\nu,\cdots,m}\\
 			& =\dfrac{1}{l_{j,k,l,\cdots,q,s,\cdots,m}} \bigg[
 			\sum_{i=1}^{n}{x_i}_{q+1/2} {b_i}_{j,k,l,\cdots,q+1/2,s\cdots,m}(\tau, \alpha_{j,k,l,\cdots,q,s,\cdots,m})\times\\
 			&\dfrac{-{x_i}_{q+1}^{\beta_{j,k,l,\cdots,q,s\cdots,m}(\tau)}}{{x_i}_{q+1}^{\beta_{j,k,l,\cdots,q,s\cdots,m}(\tau)}- {x_i}_{q}^{\beta_{j,k,l,\cdots,q,s\cdots,m}(\tau)}}\prod_{i\neq r}^{n} h_{{x_r}_\nu}\\ 
 			&-\sum_{\underset{i\neq r}{i,r=1}}^n\bigg( {d_{ir}}_{j,k,l,\cdots,q,s,\nu,\cdots,m}(\tau,\alpha_{j,k,l,\cdots,q,s,\nu,\cdots,m})\left({x_i}_q {x_r}_\nu\right)\dfrac{1}{h_{{x_i}_q}} \bigg)\bigg(\prod_{r=1}^{n} h_{{x_r}_\nu}\bigg)\bigg]
 			\end{split}
 			\end{equation} 
for   $j= 2,\cdots, N_1-1 $, \,\, $ k = 2,\cdots, N_2-1 ,\cdots,m = 2,\cdots, N_n-1 $. If one of the indices $ j, k, l,\cdots,m $ is equal to $ 1 $,
\begin{equation}
\begin{split}
&e_{j,k,l\cdots,1,s,\nu,\cdots,m}^{j,k,l,\cdots,1,s,\nu,\cdots,m}\\
&=
\dfrac{1}{l_{j,k,l,\cdots,1,s,\cdots,m}}   {x_i}_{1/2}\dfrac{1}{2}\bigg(\bigg({a_i}_{j,k,l,\cdots,1/2,s\cdots,m}(\tau, \alpha_{j,k,l,\cdots,1,s,\nu,\cdots,m})\\
&+{b_i}_{j,k,l,\cdots,1/2,s\cdots,m}(\tau, \alpha_{j,k,l,\cdots,1,s,\cdots,m})\bigg)\prod_{i\neq r}^{n} h_{{x_r}_\nu} \bigg)+ \\
+&\dfrac{1}{l_{j,k,l,\cdots,1,s,\cdots,m}} \bigg[{x_i}_{1+1/2} {b_i}_{j,k,l,\cdots,1/2,s\cdots,m}(\tau, \alpha_{j,k,l,\cdots,1,s,\cdots,m})\times\\
&\dfrac{{x_i}_{1}^{\beta_{j,k,l,\cdots,q,s\cdots,m}(\tau)} }{{x_i}_{2}^{\beta_{j,k,l,\cdots,1,s\cdots,m}(\tau)}- {x_i}_{1}^{\beta_{j,k,l,\cdots,1,s\cdots,m}(\tau)}}\prod_{i\neq r}^{n} h_{{x_r}_\nu}\\ 
&+\sum_{\underset{i\neq r}{r=1}}^n\bigg( {d_{ir}}_{j,k,l,\cdots,1,s,\nu\cdots,m}(\tau,\alpha_{j,k,l,\cdots,1,s,\nu,\cdots,m})\left({x_i}_1{x_r}_\nu\right) \dfrac{1}{h_{{x_i}_1}} \bigg)h_{{x_i}_1}\prod_{i\neq r}^{n} h_{{x_r}_\nu}\bigg]\\
&- \dfrac{1}{n} c_{j,k,l,\cdots,1,s,\nu,\cdots,m},\\
&\sum_{{j'=j-1}}^{N_1-1} \sum_{{k'=k-1}}^{N_2-1} \sum_{{l'=l-1}}^{N_3-1}\cdots \sum_{{m'=m-1}}^{N_n-1} e_{j',k',l'\cdots,0,s',\nu',\cdots,m'}^{j,k,l,\cdots,1,s,\nu,\cdots,m}\\
& =-\dfrac{1}{l_{j,k,l,\cdots,1,s,\cdots,m}} \bigg[ \sum_{i=1}^n {x_i}_{1/2}\dfrac{1}{2}\bigg(\bigg({a_i}_{j,k,l,\cdots,1/2,s,\nu,\cdots,m}(\tau, \alpha_{j,k,l,\cdots,1,s,\nu,\cdots,m})\\
&-{b_i}_{j,k,l,\cdots,1/2,s,\nu,\cdots,m}(\tau, \alpha_{j,k,l,\cdots,1,s,\nu,\cdots,m})\bigg)\prod_{i\neq r}^{n} h_{{x_r}_\nu} \bigg)\bigg],\\
&\sum_{{j'=j+1}}^{N_1-1} \sum_{{k'=k+1}}^{N_2-1} \sum_{{l'=l+1}}^{N_3-1}\cdots \sum_{{m'=m+1}}^{N_n-1} e_{j',k',l'\cdots,q',s',\nu',\cdots,m'}^{j,k,l,\cdots,1,s,\nu,\cdots,m}
\\
 & = \dfrac{1}{l_{j,k,l,\cdots,1,s,\nu,\cdots,m}} \bigg[ \sum_{i=1}^{n}{x_i}_{1+1/2} {b_i}_{j,k,l,\cdots,1+1/2,s,\nu,\cdots,m}(\tau, \alpha_{j,k,l,\cdots,1,s,\nu,\cdots,m})\times\\
&\bigg(\dfrac{-{x_i}_{2}^{\beta_{j,k,l,\cdots,1,s,\nu,\cdots,m}(\tau)}}{{x_i}_{2}^{\beta_{j,k,l,\cdots,1,s,\nu,\cdots,m}(\tau)}- {x_i}_{1}^{\beta_{j,k,l,\cdots,1,s,\nu,\cdots,m}(\tau)}}\bigg)\prod_{i\neq r}^{n} h_{{x_r}_\nu}\\ 
&-\sum_{\underset{i\neq r}{i,r=1}}^n\bigg( {d_{ir}}_{j,k,l,\cdots,1,s,\nu,\cdots,m}(\tau,\alpha_{j,k,l,\cdots,1,s,\nu,\cdots,m})\left({x_i}_1 {x_r}_\nu\right)\dfrac{1}{h_{{x_i}_1}} \bigg)h_{{x_i}_1}\prod_{i\neq r}^{n} h_{{x_r}_\nu}\bigg].
\end{split}
\end{equation}
The monotonicity of system matrix  $E (\tau,\alpha) $ is given in the following theorem.
  \begin{theorem}\label{tm1}
 	Assume  that the coefficients of $ A $ given by \eqref{matrixA} are positive and $c<0$ \footnote{Indeed  $c$ can be positive but should be less than a certain threshold $c_0>0$}. If $h$ is relatively small then the matrix $E (\tau,\alpha)$  in the system (\ref{stem}) is an $M $-matrix for any $ \alpha_{j,k,l,\cdots,m}  \,\in\,\mathcal{A}^{N}$.
 \end{theorem}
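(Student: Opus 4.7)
The plan is to follow the blueprint of \thmref{tm} essentially verbatim, since the algebraic structure of the coefficients $e^{j,k,\ldots,m}_{j',k',\ldots,m'}$ in $n$ dimensions is obtained by the same fitted construction applied along each of the $n$ axial directions, with the cross-derivative terms $d_{ir}$ playing the role they already played in the three-dimensional case. I would verify the three defining properties of an $M$-matrix separately: (i) non-positive off-diagonal entries, (ii) positive diagonal entries, and (iii) (weak) diagonal dominance with at least one strict inequality per row, which combined with irreducibility of the seven-point (here $(2n+1)$-point) stencil graph yields the $M$-matrix conclusion.

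First I would verify positivity of the fitted ratios. For each axial direction $i$ and each interior index $q\geq 1$, set
\begin{equation*}
\beta_{\cdot,q,\cdot}(\tau) \;=\; \frac{{b_i}_{\cdot,q+1/2,\cdot}(\tau,\alpha)}{\overline{a_i}_{\cdot,q+1/2,\cdot}(\tau,\alpha)}.
\end{equation*}
Since $\overline{a_i}>0$ by hypothesis, the quantity
\begin{equation*}
\frac{{b_i}_{\cdot,q+1/2,\cdot}(\tau,\alpha)}{{x_i}_{q+1}^{\beta}-{x_i}_{q}^{\beta}}
\;=\;\frac{\overline{a_i}_{\cdot,q+1/2,\cdot}(\tau,\alpha)\,\beta}{{x_i}_{q+1}^{\beta}-{x_i}_{q}^{\beta}}
\end{equation*}
is strictly positive, with the removable singularity at $b_i=0$ handled exactly as in \thmref{tm} by passing to the limit, which yields $\overline{a_i}\,[\ln({x_i}_{q+1}/{x_i}_q)]^{-1}>0$. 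This takes care of the nontrivial sign-checks inside the fitted contributions along every axis $i=1,\ldots,n$.

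Next I would inspect the sign patterns of the $e^{\cdots}_{\cdots}$ coefficients displayed just before the theorem. Each neighbor entry (shifting a single index $q\to q\pm 1$ while freezing the other $n-1$ indices) is a sum of a negative fitted contribution and the negative cross-derivative contributions $-{d_{ir}}\,\bigl(\prod_{i\neq r}{x_r}_\nu\bigr)/h_{x_i}$; positivity of $a_{ii}$ and of the $d_{ir}$ (equivalent to positivity of the off-diagonal entries of $A$) makes every such entry non-positive. Conversely the diagonal entry $e^{j,k,\ldots,m}_{j,k,\ldots,m}$ collects exactly the opposite of all these terms together with $-c>0$, so it is positive.

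Finally, for diagonal dominance I would apply the Taylor expansions
\begin{equation*}
{x_i}_{q\pm 1}^{\beta}\;\approx\;{x_i}_{q}^{\beta}\,\pm\,\beta\,{x_i}_{q}^{\beta-1}\,h_{{x_i}_q},
\end{equation*}
valid uniformly for $h=\max_{i,q}h_{{x_i}_q}$ sufficiently small, and substitute into the row-sum $\lvert e^{j,\ldots,m}_{j,\ldots,m}\rvert-\sum_{(j',\ldots,m')\neq(j,\ldots,m)}\lvert e^{j,\ldots,m}_{j',\ldots,m'}\rvert$. All fitted contributions telescope, all cross-derivative $d_{ir}$ contributions cancel pairwise between the diagonal and its $(2n)$ neighbors, and the residual equals $-c_{j,\ldots,m}(\tau,\alpha)>0$. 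The same cancellation holds on boundary rows where one of the indices equals $1$, using the degenerate-zone formula of Case II in place of the fitted expression, which still yields a positive residual (of order $-c/n$ per degenerate direction). Since the stencil graph is connected and at least one row is strictly dominant, $E(\tau,\alpha)$ has positive diagonals, non-positive off-diagonals and is weakly diagonally dominant with strict dominance on some row, hence is an $M$-matrix.

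The main obstacle is purely notational: the $n$-dimensional indexing $I(j,k,l,\ldots,m)=j+(k-1)n_1+\cdots+(m-1)\prod_{i=1}^{n-1}n_i$ makes bookkeeping of which terms cancel against which neighbor entries cumbersome, and one must check carefully that the $\binom{n}{2}$ cross-derivative terms pair up correctly across the $2n$ axial neighbors rather than leaving residuals that could spoil dominance. Once the cancellation pattern from the 3D case is traced out index-by-index and shown to persist in general $n$, the argument closes.
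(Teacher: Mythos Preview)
Your proposal is correct and is exactly the approach the paper takes: the paper's proof of \thmref{tm1} consists of the single line ``The proof follows the same lines as in Theorem~\ref{tm},'' and what you have written is precisely the $n$-dimensional replay of that argument (positivity of the fitted ratios, sign of off-diagonals, Taylor expansion to extract the residual $-c$ in the row sum). If anything, you supply more detail than the paper does, including the degenerate boundary rows and the connectivity remark.
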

 {\it Proof}  The proof follows the same lines as in Theorem \ref{tm}.}

 \section{Temporal Discretization and optimization problem} \label{sec3}
 This section is devoted to the numerical time discretization method  for the spatially discretized optimization problem after the fitted finite volume method. 
 Let us re-consider the differential equation coupled with optimization problem given in $\left( \ref{pan1}\right) $ by  
 \begin{equation}
 \begin{split}
 \label{nmod}
 \dfrac{d\,{\textbf{v}(\tau)}}{d\,\tau}& = \sup_{\alpha \in \mathcal{A}^N} \left[ A (\tau,\alpha) \textbf{v}(\tau)+ G(\tau,\alpha) \right]\\
 &\textbf{v}(0)\,\,\,\text{given},
 \end{split}
 \end{equation}
 For temporal discretization, we use a constant time step $\Delta t > 0$, of course variable time steps can be used.  
 The temporal grid points given by\, $\Delta t = \tau_{n+1}-\tau_n $~ for ~ $n =  1, 2,\ldots m-1 $. We denote $\textbf{v}(\tau_n) \approx \textbf{v}^n$ , $A^n (\alpha) = A(\tau_n,\alpha)$ and $G^n (\alpha)=G(\tau_n,\alpha).$
 
 For $\theta \,\in \left[\frac{1}{2}, 1\right]$, following \cite{HPFH}, the $\theta$-Method approximation  in time is given by
 \begin{equation}\label{scheme}
 \begin{split}
 & \textbf{v}^{n+1} - \textbf{v}^{n} = \Delta t \,\sup_{\alpha \in \mathcal{A}^N} \left( \theta\,  [A^{n+1} (\alpha)\,\textbf{v}^{n+1} + G^{n+1}(\alpha)] \right. \\ 
 &\left. + (1-\theta)\, [A^{n} (\alpha)\,\textbf{v}^{n} + G^{n}(\alpha)]\right),
 \end{split}
 \end{equation}
 \textcolor{blue}{this also can be written as 
 \begin{equation}\label{schetemp}
 \begin{split}
 \inf_{\alpha \in \mathcal{A}^N} \left( [I + \Delta t \,\theta\, E^{n+1}]\textbf{v}^{n+1} +F^{n+1}(\alpha) + [I + \Delta t \,\theta\, E^{n}]\textbf{v}^{n} +  F^{n}(\alpha)\right) = 0.
 \end{split}
 \end{equation}}
 We can see that to find the unknown $\textbf{v}^{n+1}$, we need also to solve an optimization.  Let 
 \begin{equation}
 \label{opt}
 \alpha^{n+1} \in \left(\underset{\alpha \in \mathcal{A}^N }{arg \sup} \left\lbrace \theta \,\Delta t \left[ A^{n+1}(\alpha)\,\textbf{v}^{n+1} +  G^{n+1}(\alpha )\right] + (1-\theta)\,\Delta t \left[A^{n} (\alpha )\,\textbf{v}^{n} + G^{n}(\alpha)\right] \right\rbrace\right).
 \end{equation}
 Then, the unknown $\textbf{v}^{n+1}$ is solution  of the following equation
 \begin{equation}\label{vu1}
 \begin{split}
 & [ I  - \theta\, \Delta t \,A^{n+1} (\alpha^{n+1})]\,\textbf{v}^{n+1} = [I + (1-\theta)\,\Delta t\, A^{n} (\alpha^{n+1})]\,\textbf{v}^{n} \\
 &+[\theta\, \Delta t \, G^{n+1}(\alpha^{n+1})+(1-\theta) \Delta t\, G^{n}(\alpha^{n+1})] \nonumber,
 \end{split}
 \end{equation} 
 Note that, for $\theta= \dfrac{1}{2}$, we have the \textit{Crank Nickolson scheme} and for $\theta=1$ we have the \textit{Implicit scheme}.
 Unfortunately \eqref{scheme}-\eqref{opt} are nonlinear and coupled and we need to iterate at every time step.
 The following  iterative scheme close to the one in \cite{HPFH} is used.
 \begin{enumerate}
 	\item Let  $ \left( \textbf{v}^{n+1}\right)^0=\textbf{v}^{n}$, 
 	\item Let $ \hat{\textbf{v}}^{k}= \left( \textbf{v}^{n+1}\right)^k$,
 	\item  For $k=0,1,2 \cdots $ until convergence ($\Vert \hat{\textbf{v}}^{k+1}-\hat{\textbf{v}}^{k}\Vert \leq \epsilon$, given tolerance) solve
 	\begin{equation} \label{mi2}
 	\begin{split}
 	&\alpha^{k}_i \in \left(\underset{\alpha \in \mathcal{A}^N }{arg \sup} \left\lbrace \theta \,\Delta t \left[ A^{n+1} (\alpha)\,\hat{\textbf{v}}^k+  G^{n+1}(\alpha) \right]_i  + (1-\theta)\,\Delta t \, \left[A^{n} (\alpha )\,\textbf{v}^{n} + G^{n}(\alpha)\right]_i \right\rbrace\right)\\
 	& \alpha^{k}=(\alpha^{k})_i\\
 	& [ I  - \theta\, \Delta t\,A^{n+1} (\alpha^{k})]\,\hat{\textbf{v}}^{k+1} = [I + (1-\theta)\,\Delta t \, A ^{n}(\alpha^{k})] \textbf{v}^{n} \\
 	&+[\theta\, \Delta t \, G^{n+1}(\alpha^{k})+(1-\theta) \Delta t \, G^{n}(\alpha^{k})],
 	\end{split}
 	\end{equation}
 	\item  Let  $k_l$ being the last iteration in step 3,  set $\textbf{v}^{n+1}:=\hat{\textbf{v}}^{k_l}$,\,\, $\alpha^{n+1}:=\alpha^{k_l}$.	
 \end{enumerate}
 	\textcolor{blue}{The  monotonicity of system matrix of \eqref{schetemp}, more precisely $ [I + \Delta t \,\theta E^{n+1}] $ is given in the following theorem. 	
 	\begin{theorem}\label{uniq}   Under the same assumptions as  in  Theorem \ref{tm},  for any given  $n = 1, 2, \cdots  , m - 1 $,   the
 		system matrix  $ [I + \Delta t \,\theta E^{n+1}] $ in  \eqref{schetemp}  is an $M $--matrix for each $ \alpha \in \mathcal{A}^N. $
 	\end{theorem}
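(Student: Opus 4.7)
The plan is to deduce Theorem \ref{uniq} directly from Theorem \ref{tm} (and its $n$-dimensional extension Theorem \ref{tm1}) by checking the three defining properties of an $M$-matrix for the shifted, scaled matrix $B := I + \Delta t\,\theta\,E^{n+1}(\alpha)$: positive diagonal entries, non-positive off-diagonal entries, and (strict) diagonal dominance. Since $\theta \in [1/2,1]$ and $\Delta t > 0$, the factor $\Delta t\,\theta$ is a positive scalar, so multiplication by it preserves signs and inequalities entrywise.

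First, I would record the sign structure. By Theorem \ref{tm} we know $E^{n+1}(\alpha)$ has $e_{I,I} > 0$ and $e_{I,J} \leq 0$ for $I\neq J$. Therefore $B_{I,I} = 1 + \Delta t\,\theta\,e_{I,I} > 1 > 0$ and $B_{I,J} = \Delta t\,\theta\,e_{I,J} \leq 0$ for $I\neq J$, which gives the first two $M$-matrix conditions at once. Second, for diagonal dominance I would use the inequality already established in the proof of Theorem \ref{tm}, namely
\begin{equation*}
|e_{I,I}| \;\geq\; \sum_{J\neq I} |e_{I,J}|
\end{equation*}
(with strict inequality coming from the $-c_{i,j,k}(\tau,\alpha)>0$ contribution when $c<0$). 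Multiplying by $\Delta t\,\theta$ and adding $1$ to the diagonal yields
\begin{equation*}
|B_{I,I}| \;=\; 1 + \Delta t\,\theta\,|e_{I,I}| \;\geq\; 1 + \Delta t\,\theta\sum_{J\neq I}|e_{I,J}| \;>\; \sum_{J\neq I}|B_{I,J}|,
\end{equation*}
so $B$ is strictly diagonally dominant. Combined with the sign pattern above, $B$ is an $M$-matrix.

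Finally, I would note that the argument is uniform in $\alpha \in \mathcal{A}^N$ because the bounds used in Theorem \ref{tm} hold for every admissible control, and that the mesh smallness hypothesis $h$ sufficiently small is inherited unchanged from Theorem \ref{tm}; no new restriction on $\Delta t$ is needed, reflecting the unconditional stability the paper advertises for the implicit/$\theta$-scheme. The only minor subtlety to watch is the $\theta=1/2$ case, where I should verify the strict inequality above does not degenerate; this is ensured by the $+1$ arising from the identity part, so the Crank--Nicolson case poses no obstacle. The proof is essentially a one-line consequence of Theorem \ref{tm}, so I expect no real difficulty, only bookkeeping.
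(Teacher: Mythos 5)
Your proposal is correct and follows essentially the same route as the paper: the paper's (very terse) proof also deduces the result directly from Theorem \ref{tm}, noting that since $\Delta t>0$ the matrix $[I + \Delta t\,\theta E^{n+1}]$ inherits the sign pattern and becomes strictly diagonally dominant thanks to the identity shift, hence is an $M$-matrix. Your write-up merely fills in the bookkeeping (positive scaling preserves signs, the $+1$ guarantees strictness uniformly in $\alpha$ and without any restriction on $\Delta t$ or $\theta$), which is exactly what the paper leaves implicit.
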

 	\begin{proof}  The proof is obvious.  Indeed  as in Theorem \ref{tm},  $ [I + \Delta t \,\theta E^{n+1}] $ is (strictly) diagonally dominant since $ \Delta t  >  0 $. Then, it is an  $M $--matrix.
 	\end{proof}}
 	\textcolor{blue}{The merit of the
 		proposed method is that it is unconditionally stable in time because of the implicit nature of the time
 		discretization. More precisely, following \cite[ Theorem 6 and Lemma 3]{12}, we can easily prove that the
 		scheme \eqref{scheme} is stable and consistent, so the convergence of the scheme is ensured (see \cite{G1})}	
\section{Application} \label{sec4}
To validate  our method presented in the previous section,  we present  here some numerical experiments. 
All computations were performed in Matlab 2013.

 
 Consider the  following three  dimensional Merton's stochastic control problem  such that  $ \alpha = \alpha_1(t,x) $ is a feedback control in $ [0,1] $ given by 	
 \begin{equation} 
 \label{pb1}
 v(t,x,y,z)=\underset{\alpha\, \in\, [0,1]}{\sup} \mathbb{E}\left\lbrace \dfrac{1}{p}\,x^p(T) \times \dfrac{1}{p}\,y^p(T)\times \dfrac{1}{p}\,z^p(T)\right\rbrace,\,\,\, 0 < p < 1 
 \end{equation}
 s.t.
 \begin{equation}\label{c1}
 \begin{split}
 d x_t &= \left(r_1 + \alpha_t\, (\mu_1-r_1)\right)\,x_t\,dt + \sigma\,x_t \alpha_t\,d\omega_{t},\\
 d y_t &=  \mu_2\,y_t\,dt + \sigma\,y_t\, d\omega_{t},\\ 
 d z_t &= \mu_3\,z_t\,dt + \sigma\,z_t\, d\omega_{t}.
 \end{split}
 \end{equation}
   $r_1$, $\mu_1$, $\mu_2$, $\mu_2$, $\sigma $ are positive constants, $ x_t,\, y_t,\, z_t\,\in\,\mathbb{R} $. We assume that $ \mu_1 > r_1 $. For the problem \eqref{pb1}-\eqref{c1}, the corresponding HJB equation is given by
 \begin{equation}\label{65}
 \begin{cases}
 \dfrac{d\,v(t, x, y, z)}{d\,t} + \underset{\alpha \in [0, 1]}{\sup} \left[ L^\alpha \,v(t, x, y, z)\right] = 0 \quad\text{on} \  [0,T)\times \mathbb{R}\times \mathbb{R}\times \mathbb{R}\\
 v(T, x, y, z ) = \dfrac{x^p}{p}\times \dfrac{y^p}{p}\times \dfrac{y^p}{p}, \,\, \,x,\, \,y,\,\,\, z\,\in \mathbb{R}_+
 \end{cases}
 \end{equation}	  
 where  \begin{equation*} 
 \begin{split}
 &L^\alpha \,v(t, x, y, z) \\
 &=\dfrac{1}{2}\,\sigma^2\,\alpha^2\,x^2 \dfrac{d^2 v(t, x, y, z)}{d x^2} + \dfrac{1}{2}\,\sigma^2\,y^2 \dfrac{d^2 v(t, x, y, z)}{d y^2} +\dfrac{1}{2}\,\sigma^2\,z^2 \dfrac{d^2 v(t, x, y, z)}{d z^2} \\
 & + \sigma^2\,\alpha\,x\,y\, \dfrac{d^2 v(t, x, y, z)}{d x\partial y} + \sigma^2\,\alpha\,x\,z\, \dfrac{d^2 v(t, x, y, z)}{d x\,d z} + \sigma^2\,z\,y\, \dfrac{d^2 v(t, x, y, z)}{d z\,d y} \\
 & + (r_1 + (\mu_1 -r_1)\alpha )\,x\, \dfrac{d v(t, x,y,z)}{d x} + \mu_2\,y\, \dfrac{d v(t, x, y, z)}{d y} + \mu_3\,z\, \dfrac{d v(t, x, y, z)}{d z}.
 \end{split}
 \end{equation*}
 \begin{equation} 
 \dfrac{d v(t,x,y,z) }{\partial t} + \sup_{\alpha \, \in\, [0,1]}\left[ \nabla\cdot \left( k(t,x,y,z,\alpha) (v(t,x,y,z))\right) + c(t,x,y,z,\alpha)\,v(t,x,y,z) \right] = 0,
 \end{equation}	  
 and the different variable in (\ref{65}) is given by  $$k(v(t,x,y,z)) = A(t,x,y,z,\alpha)\nabla v(t, x, y, z)+ b(t,x,y,z,\alpha)\,v(t, x, y, z)$$\, is the flux,\, $b = (x\,b_1, y\,b_2, z\,b_3)^T$,
 \[
 A=\left[ \begin{array}{ccc}
 a_{11} & a_{12} & a_{13} \\
 a_{21} & a_{22}& a_{23}  \\
 a_{31} & a_{32}& a_{33}
 \end{array} \right].
 \]
 with
 \begin{equation}
 \begin{split}
 a_{11} &= \dfrac{1}{2}\sigma^2\,\alpha^2\,x^2 ,~ a_{22} = \dfrac{1}{2}\sigma^2\,y^2, ~ a_{33} = \dfrac{1}{2}\sigma^2\,z^2, \\
 a_{12} & = a_{21} = \dfrac{1}{2}\sigma^2\,\alpha\,x\,y, ~ a_{13} = a_{31} = \dfrac{1}{2}\sigma^2\,\alpha\,x\,z,\\
 a_{23} & = a_{32} = \dfrac{1}{2}\sigma^2\,y\,z.
 \end{split}
 \end{equation}
 	\begin{equation}
 	\begin{split}
 	\begin{cases}
 	& b_1(t,x,y,z,\alpha)=  r_1 + (\mu_1-r_1)\,\alpha - \sigma^2\,\alpha - \sigma^2\,\alpha^2\\
 	& b_2(t,x,y,z,\alpha) =  \mu_2 -\dfrac{1}{2}\sigma^2\alpha - \dfrac{3}{2}\sigma^2 \\
 	& b_3(t,x,y,z,\alpha) = \mu_3 -\dfrac{1}{2}\sigma^2\,\alpha - \dfrac{3}{2}\sigma^2\\
 	& c(t,x,y,z,\alpha) = - \left[ r_1 + (\mu_1-r_1)\,\alpha - 2\,\sigma^2\,\alpha- \sigma^2\,\alpha^2
 	+ \mu_2\,+\mu_3 - 3\,\sigma^2 \right].
 	\end{cases}
 	\end{split}
 	\end{equation}
 The domain where we compare the solution is $ \Omega =\left[0, x_{{max}}\right] \times \left[0, y_{{max}}\right]\times \left[ 0, z_{{max}}\right]$. For each simulation,
 the exact or reference solution is the analytical solution using Ansatz method as we are going to develop in the next section.
 \subsection{Analytical solution using Ansatz method} \label{sec5}
 Here we  propose the analytical solution using the Ansatz decomposition. Let set the Ansatz decomposition of $v$
 \begin{align}
 v(t, x, y, z)=\psi(t)\times u(x)\times u(y)\times u(z),
 \end{align}
 where \,\,$ u(x) = \dfrac{x^p}{p},$  $\,\, 0 < p < 1$,  $\forall\, x \,\in \mathbb{R}_+ $ is the power utility function. The different derivative of  $v(t,x,y,z) $
 gives us
 \begin{equation}
 \begin{split}
 \begin{cases}
 \dfrac{d v(t,x,y,z)}{d x}=\psi(t) \dfrac{d u(x)}{d x}\, u(y)\,u(z);\,\,
 \dfrac{d v(t,x,y,z)}{d y}=\psi(t) \dfrac{d u(y)}{d y}\, u(x)\,u(z)\\
 \dfrac{d v(t,x,y,z)}{d z}=\psi(t) \dfrac{d u(z)}{d z}\, u(x)\,u(y);\,\,
 \dfrac{d v(t,x,y,z)}{d t}=\psi'(t)\, u(x)\,u(y)\,u(z)\\
 \dfrac{d^2 v(t,x,y,z)}{d x^2} =\psi(t) \dfrac{d^2 u(x)}{d x^2}\, u(y)\,u(z);\,\,
 \dfrac{d^2 v(t,x,y,z)}{d y^2} =\psi(t) \dfrac{d^2 u(y)}{d y^2}\, u(x)\,u(z)\,\\
 \dfrac{d^2 v(t,x,y,z)}{d z^2} =\psi(t) \dfrac{d^2 u(z)}{d z^2}\, u(x)\,u(y);\,\,
 \dfrac{d^2 v(t,x,y,z)}{d x\,d y} =\psi(t) \dfrac{d u(x)}{d x}\,\dfrac{d u(y)}{d y}\, u(z)\\
 \dfrac{d^2 v(t,x,y,z)}{d x\,d z} =\psi(t) \dfrac{d u(x)}{d x}\,\dfrac{d u(z)}{d z}\, u(y);\,\,
 \dfrac{d^2 v(t,x,y,z)}{d y\,d z} =\psi(t) \dfrac{d u(y)}{d y}\,\dfrac{d u(z)}{d z}\, u(x),
 \end{cases}
 \end{split}
 \end{equation}
 plugging into (\ref{65}), we get
 \begin{equation}
 \begin{split}
 \begin{cases}
 &\dfrac{d\,\psi(t)}{d t} \, u(x)\,u(y)\,u(z) \\
 & + r_1\,x \,\psi(t)  \dfrac{d \left(u(x)\,u(y)\,u(z)\right)}{d x} 
 +\underset{\alpha \in A}{\sup}\left[  \alpha (\mu_1-r_1)\,x\,\psi(t) \dfrac{d \left(u(x)\,u(y)\,u(z)\right) }{d x} \right.\\
 &\left.+\mu_2\,y\,\psi(t) \dfrac{d \left(u(x)\,u(y)\,u(z)\right) }{d y}+\mu_3\,z\,\psi(t) \dfrac{d \left(u(x)\,u(y)\,u(z)\right) }{d z} \right.\\
 &\left. + \dfrac{1}{2} \sigma^2\,\alpha^2\,x^2\psi(t) \dfrac{d^2 \left(u(x)\,u(y)\,u(z)\right)}{d x^2} 
 + \dfrac{1}{2} \sigma^2\,y^2 \,\psi(t) \dfrac{d^2 \left(u(x)\,u(y)\,u(z)\right)}{d y^2} \right.\\
 &\left. + \dfrac{1}{2} \sigma^2\,z^2\psi(t) \dfrac{d^2 \left(u(x)\,u(y)\,u(z)\right)}{d z^2}  +  \sigma^2\,\alpha\,\psi(t)\,x\,y\, \dfrac{d^2 }{d x\,d y} \left(u(x)\,u(y)\,u(z)\right) \right. \\
 & \left.+  \sigma^2\,\alpha\,\psi(t)  \,x\,z\, \dfrac{d^2 }{d x\,d z} \left(u(x)\,u(y)\,u(z)\right)+ \sigma^2\,\psi(t)  \,y\,z\, \dfrac{d^2 }{d y\,d z} \left(u(x)\,u(y)\,u(z)\right)\right]=0 \\
 &\psi(T)=1,\;  (\text{since}~~ v(T,x,y,z)=\psi(T)\,u(x)\,u(y)\,u(z)= u(x)\,u(y)\,u(z))
 \end{cases}
 \end{split}
 \end{equation}
  We then  obtained
 \begin{equation}
 \begin{split}
 & \dfrac{d \psi(t)}{d t}+ p\,\rho\,\psi(t) = 0 ~~\text{where}~~\\
 & \rho = \underset{\alpha \in \,[0,1]}{\sup}\left[ r_1  + (\mu_1-r_1)\,\alpha + \mu_2+\mu_3\,+ \dfrac{1}{2} \sigma^2\,\alpha^2\,(p-1)\right.\\  
 & \left. + \sigma^2\,(p-1)
 + 2\,\sigma^2\,\alpha\,p  +\sigma^2\,p\right],\\
 & \psi(T) = 1.
 \end{split}
 \end{equation}
 So by setting $ \tau = T-t $, the analytical value function for Ansatz method is then equal to 
 \begin{equation}\label{pl}
 v\left(\tau^n, x_i, y_j, z_k\right) =  e^{p \times (n \times \Delta t - T) \times \rho} \times \dfrac{(x_i)^p}{p} \times \dfrac{(y_j)^p}{p}\times \dfrac{(z_k)^p}{p}, \,\,\text{with}\,\,0 < p < 1.
 \end{equation}
We use the  following $L^2\left( [0,T]\times \Omega\right)$ norm of the absolute error 
 \begin{equation}
 \begin{split}
 &\ \left\| v^m-v\right\|_{L^2\left(  [0,T]\times \Omega\right)}\\
 & = \left(\sum_{n = 0}^{m-1} \sum_{i = 1}^{N_1-1} \sum_{j = 1}^{N_2-1} \sum_{k = 1}^{N_3-1} (\tau_{n+1}-\tau_n)\times l_{i,j,k}\times (v_{i,j,k}^n - v\left( \tau^n, x_{i}, y_j, z_k\right))^2 \right)^{1/2},
 \end{split}
 \end{equation}
 where $v^m$ is the numerical approximation of $v$ computed from  our numerical scheme. 
For our computation, we us we have  $\Omega=[0,1/2]\times [0,1/4] \times [0,1/2]$  for computational domain  with $N_1= 10$, $N_2 = 10$, $N_3= 10$, $r_1 = 0.0449$, $\mu_1 = 0.0657$, $\mu_2 = 0.067$, $\mu_3 = 0.066$, $\sigma = 0.2537$, $p= 0.13$ and   $T=1 $. Figure \ref{5.5d} shows the structure of  the matrix $ A $ after space discretisation with the fitted volume method.  As you can observe the structure of the matrix is similar to the one from finite difference method.
  \textcolor{blue}{Figure \ref{control} shows the optimal investment policy as function of $ x $ while using the fitted scheme.   The optimal investment policy  for finite difference method  is quite similar. 
  Indeed  the optimal parameter $\alpha$ is independent of $y$ and  $z$.
 The controller is
  the solution of (\ref{pb1}). 
It is computed with the numerical procedure as outlined in Section \ref{sec3}. We have also  found that in overall the value the  maximum  number of iterations  in our optimisation algorithm is 3  in both  fitted scheme and finite difference scheme.}
  \begin{figure}[!h]
  	   	\begin{minipage}[b]{0.5\linewidth}
  	\centering\includegraphics[scale=0.42]{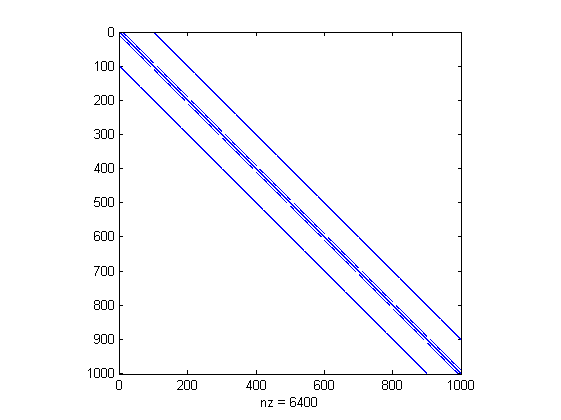}
  	\caption{Structure of the matrix $ A $
  		at time $T=1$.}
  	\label{5.5d}
  	    	\end{minipage}\hfill
  	    	\begin{minipage}[b]{0.5\linewidth}
  	    		\centering\includegraphics[scale=0.4]{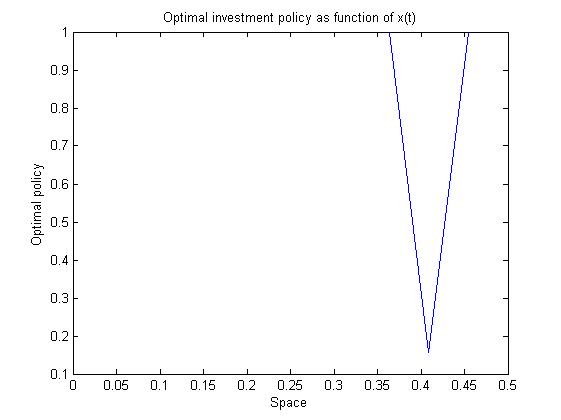}
  	    		\caption{Optimal investment policy}{ at time $T=1$. }
  	    		\label{control}
  	    	\end{minipage}
  \end{figure}
 
 \newpage
 We compare the fitted finite volume and the finite difference method in Table \ref{2i}
 \begin{table}[ht]
 	\begin{center}
 			\begin{tabular}{|c|c|c|c|c|}
 			\hline
 			Time subdivision &  $200 $ &  $150 $ & $ 100 $ &  $50 $ \\ 
 			\hline Error of  fitted finite volume  method & 4.65 E-01  & 6.31 E-01  & 8.63 E-01 & 1.30 E-00\\
 			\hline Error of  finite difference method & 5.15 E-01  & 6.98 E-01  & 9.21 E-01 & 1.36 E-00 \\  
 			\hline
 		\end{tabular}
 		\vspace*{0.1cm}
 		\caption{Comparison of the  implicit fitted finite volume method and implicit finite difference method. For the parameters, we have used $N_1= 10$, $N_2 = 10$, $N_3= 10$, $r_1 = 0.0449$, $\mu_1 = 0.0657$, $\mu_2 = 0.067$, $\mu_3 = 0.066$,
 			$\sigma = 0.2537$, $p= 0.13$ and   $T=1 $.}
 		\label{2i}
 	\end{center}
 \end{table}
 
\textcolor{blue}{ Figure \ref{5.5dd} shows the structure of  the matrix $ A $  and Figure \ref{controll} shows the optimal investment policy as function of $ x $. The controller is
 	the solution of (\ref{pb1}). 
 	It is computed with the numerical procedure as outlined in Section \ref{sec3}.
 \begin{figure}[!h]
 	\begin{minipage}[b]{0.5\linewidth}
 		\centering\includegraphics[scale=0.42]{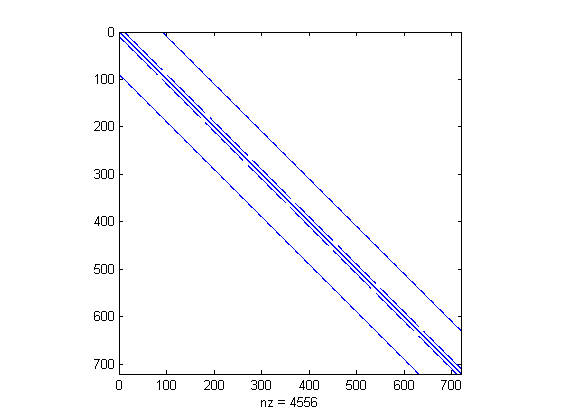}
 		\caption{Structure of the matrix $ A $
 			at time $T=1.5$.}
 		\label{5.5dd}
 	\end{minipage}\hfill
 	\begin{minipage}[b]{0.5\linewidth}
 		\centering\includegraphics[scale=0.4]{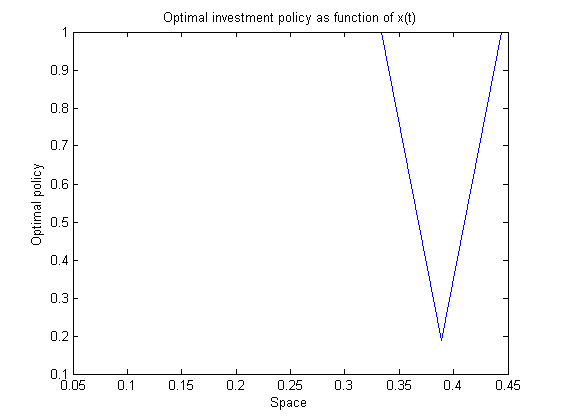}
 		\caption{Optimal investment policy}{ at time $T=1.5$. }
 		\label{controll}
 	\end{minipage}
 \end{figure} }
 

 In  Table \ref{2ii}, we  have used   $\Omega=[0,1/2]\times [0,1/4] \times [0,1/2]$  for computational domain with the following parameters $N_1= 8$, $N_2 = 9$, $N_1= 10$, $r_1 = 0.0449$, $r_2 = 0.0448/3$, $r_3 = 0.0447$, $\mu_1 = 0.0657$, $\mu_2 = 0.0656$, $\mu_3 = 0.0655$, $\sigma = 0.2537$, $p= 0.17$ and   $T=1.5 $.
 \begin{table}[!h]
 	\begin{center}
 		
 		\begin{tabular}{|c|c|c|c|c|}
 			\hline
 			Time subdivision &  $200 $ &  $150 $ & $ 100 $ &  $50 $ \\ 
 			\hline Error of   the fitted  finite volume method  & 2.24 E-01  & 2.30 E-01  & 3.99 E-01 & 5.97 E-01\\ 
 			\hline Error of  the finite difference method & 2.40 E-01  & 3.18 E-01  & 4.12 E-01 & 5.99 E-01 \\ 
 			\hline
 		\end{tabular}
 		\vspace*{0.1cm}
 		\caption{Comparison of the  implicit fitted finite volume method  and implicit finite difference method. For the parameters, we have used  $N_1= 8$, $N_2 = 9$, $N_1= 10$, $r_1 = 0.0449$, $r_2 = 0.0448/3$, 
		$r_3 = 0.0447$, $\mu_1 = 0.0657$, $\mu_2 = 0.0656$, $\mu_3 = 0.0655$, $\sigma = 0.2537$, $p= 0.17$  and   $T=1.5 $.}
 		\label{2ii}
 	\end{center}
 \end{table} 
 \newpage
 Table \ref{2i} and Table \ref{2ii} display the numerical errors of finite volume method  and finite difference method. \textcolor{blue}{ By fitting the data from Table \ref{2i} and Table \ref{2ii}, we found that the convergence order in time  $1$ for the fitted finite volume method and the finite difference method}. From the two tables, we can observe a slight accuracy of the  implicit fitted finite volume comparing to the implicit finite difference method,  thanks to the fitted technique.

 \section{Conclusion}
 \label{sec6}
 We have introduced a novel scheme  based on  finite volume method with fitted  technique  to solve \textcolor{blue}{high} dimensional   stochastic optimal  control problems ($n\geq 3$).
 The optimization  problem  is solved  at every time step using iterative method. We  have shown that the system matrix of the resulting non linear system is
 an $M $-matrix  and  therefore the maximum principle is preserved  for  the discrete system obtained after the fitted finite volume spatial discretization. 
 Numerical experiments  are used to demonstrate the  accuracy of the novel  scheme comparing to the standard finite difference method.
\section{Conflicts of interest/Competing interests }
We have no conflicts of interest to declare.


\begin{thebibliography}{}
  \bibitem{chistoph2019}
Bénézet,  C.; Chassagneux, J.-F.  and   Reisinger, C.,
 \newblock{A numerical scheme for the quantile hedging problem}
 \newblock{ arXiv:1902.11228v1}, (2019).
	      
 \bibitem{chistoph2020}
	       Henderson, V.;  Kladívko, K.;  Monoyios  M.  and  Reisinger C.
 \newblock{ Executive stock option exercise with full and partial information on a drift change point.} 
	  \newblock{arXiv:1709.10141v4}, (2020).	
	    
\bibitem{151}  
Zhu, Song-Ping, Ma, Guiyuan,
\newblock{ An analytical solution for the HJB equation arising from the Merton problem}. 
\newblock{International Journal of Financial Engineering 05(01), 1850008 (26 pages), (2018)}.

 \bibitem{152}
   Pfeiffer, Laurent;
 \newblock{Two Approaches to Stochastic Optimal Control Problems with a Final-Time Expectation Constraint}.\newblock{Applied Mathematics \& Optimization 77(2), 377-404,  (2018)}.
	  
	  	 
 \bibitem{15} Christelle Dleuna Nyoumbi and Antoine Tambue, 
 {A fitted finite volume method for stochastic optimal control problems in finance,} AIMS Mathematics, {6}(4), {3053--3079},  (2021).
 
 \bibitem{VaR} 
 Valkov, R.:
 Fitted finite volume method for a generalized Black Scholes equation transformed on finite interval.
 Numerical Algorithms 65(1), 195--220 (2014).
 \bibitem{kylan} 
 Song, N.,  Ching, W.-K., Siu, T.-K., Yiu, C. K.-F.:
 On Optimal Cash Management under a Stochastic Volatility Model.
 East Asian Journal on Applied Mathematics 3 (2), 81--92 (2013).
 \bibitem{k2017} 
 Zhao, W.,  Tao, Z., Kong, T.:
 High order numerical schemes for second-order FBSDEs with applications to stochastic optimal control.
 Communications in Computational Physics 21(3), 808-834, (2017).
 \bibitem{k2019} 
 Rodriguez-Gonzalez, P. T.,  Rico-Ramirez, V., Rico-Martinez, R., Diwekar, U. M.:
 A new approach to solving stochastic optimal control problems.
 Mathematics 7, 1207, (2019)

 \bibitem{WS} 
 Wang, S.
 A Novel fitted finite volume method for the Black Scholes equation governing option pricing. IMA J. Numer. Anal. 24, 699--720  (2004)
 
 \bibitem{Ws} Hull, J., White, A.:
 The pricing of options on assets with stochastic volatilities.
  J.Finance 42(2), 281--300  (1987)
 
 \bibitem{JH} Holth, J.: Merton's portfolio problem, constant fraction investment strategy and frequency of portfolio rebalancing.
 Master Thesis, University of Oslo, http://hdl.handle.net/10852/10798 (2011)
 
 \bibitem{IGA} Gy\"{o}ngy, I., \v{S}i\v{s}ka, D.
 On finite difference approximations for normalized Bellman's equations.
 Applied Mathematics and Optimization 60, Article number: 297  (2009)
 
 \bibitem{JER} 
  Jakobsen, E.R.:
 On the rate of convergence of approximations schemes for Bellman equations associated with optimal stopping time problems.
 Mathematical Models and Methods in Applied Sciences 13 (05), 613--644 (2003)
 \bibitem{KL}
  Krylov, N.V.:
 On the rate of convergence of finite-difference approximations for Bellman's equations with variable coefficients.
 Probability Theory and Related Fields 117, 1--16 (2000)
 
 \bibitem{KL1} 
 Krylov, N.V.:
 The rate of convergence of finite-difference approximations for Bellman's equations with Lipschitz coefficients.
 Applied Mathematics and Optimization 52, 365--399 (2005)
 
 \bibitem{Cra1} 
 Crandall, M.G., Lions, P.L.
 Viscosity solutions of Hamilton-Jacobi equations. Transactions
 of the American Mathematical Society 277(1), 1--42 (1983).
 
 \bibitem{Cra2}
  Crandall, M. G., Evans, L. C., Lions, P. L.
 Some properties of viscosity solutions of Hamilton-Jacobi equations.
 Transactions of the American Mathematical Society 282(2), 487--502 (1984)
 \bibitem{Cra3} Crandall, M.G. and Lions, P.L.:
 Two approximations of solutions of Hamilton-Jacobi
 	equations. Mathematics of Computation 43, 1--19 (1984)
 \bibitem{HP}
  Pham, H.:
 Optimisation et contr\^{o}le stochastique appliqu\'{e}s  \`{a} la finance. Math\'{e}matiques et applications, Springer-verlag New York (2000)
 
 \bibitem{WGF1} Wang, S., Gao F., Teo, K.L.:
 An upwind finite difference method for the approximation
 	of viscosity solutions to Hamilton-Jacobi-Bellman equations. IMA Journal of Mathematical Control and Information 17,  167--178 (2000)
 \bibitem{KNV1} Krylov, N.V.:
 Approximating value functions for controlled degenerate diffusion processes by using piece-wise constant policies.
 Electronic Journal of Probability 4(2), 1--19 (1999)
 \bibitem{KNV2} Krylov, N. V.:
 Control of a solution of a stochastic integral equation. Th. Proba. Appl. 17, 406--446 (1972)
 
 \bibitem{Cra5} Crandall, M.G., Ishii, H., Lions, P.L.:
 User's guide to viscosity solutions of second order
 	partial differential equations. American Mathematical Society 27,  1--67 (1992)
 
 
 \bibitem{Cra7} Huang,C.-S., Wang, S., Teo, K.L.:
 On application of an alternating direction method
 	to Hamilton-Jacobi-Bellman equations.
 Journal of Computational and Applied Mathematics 27, 153--166 (2004)   
 
 \bibitem{G1} Barles, G., Souganidis, P.: Convergence of approximation schemes for fully nonlinear
 	second-order equations.
 Asymptotic Anal. 4,  271--283 (1991)
 
 \bibitem{JF1}Bonnans, J. F., Zidani, H.: 
 Consistency of generalized finite difference schemes for the
 	stochastic HJB equation. SIAM J. Numer. Anal. 41,  1008--1021 (2003)
 
 \bibitem{AM1} Oberman, A. M.:
 Convergent difference schemes for degenerate elliptic and parabolic equations:
 	Hamilton-Jacobi equations and free boundary problems. SIAM J. Numer. Anal. 44,
 879--895 (2006)
 
 \bibitem{HJ1} Kushner, H. J.:
 Numerical methods for stochastic control problems in continuous time. SIAM J. Control Optim. 28, 999--1048 (1990)
 
 \bibitem{WH2} Fleming, W. H.,  Soner, H. M.:
 Controlled Markov Processes and Viscosity Solutions,
 Stochastic Modelling and Applied Probability. Springer, New York 25 (2006)
 
 \bibitem{MGP} Crandall, M. G.,  Lions, P. L.:
 Convergent difference schemes for nonlinear parabolic
 	equations and mean curvature motion.
 Numer. Math. 75, 17--41 (1996)
 \bibitem{wilmott2005best} Wilmott, P.:
 \emph{The Best of Wilmott 1: Incorporating the Quantitative Finance	Review}. John Wiley \& Sons, (2005).
 
 \bibitem{MK} Kocan, M.:
 Approximation of viscosity solutions of elliptic partial differential equations on
 	minimal grids. Numer. Math. 72,  73--92 (1995)
 
 \bibitem{HPFH} 
 Peyrl, H., Herzog, F., Geering, H. P.: 
 Numerical Solution of the Hamilton-Jacobi-Bellman Equation for Stochastic Optimal Control Problems. WSEAS  Int. Conf. on  Dynamical Systems  and control, Venice, Italy, November 2-4,   489--497 (2005)
 
 \bibitem{huangfitted2009}
 Huang, C.-S, Hung, C.-H,  Wang, S.:
 On convergence of a Fitted Finite Volume Method for the Valuation of Options on Assets with Stochastic Volatilities.
 IMA J. Numer. Anal. 30, 1101--1120 (2010)
  
 
 \bibitem{huangfitted2006} 
 Huang, C.-S.,   Hung, C.-H.,  Wang, S.:
 A Fitted Finite Volume  Method for the Valuation of {Options} on Assets with Stochastic Volatilities.
 Computing  77(3), 297--320 (2006)
 
  \bibitem{Peter}
 Forsyth, P., Labahn, G.:
 Numerical Methods for Controlled Hamilton-Jacobi-Bellman PDEs in Finance. Journal of Computational Finance 11(2),  1--43 (2007)
{\bibitem{12}
 Angermann, L. and Wang, S. Convergence of a fitted finite volume method for the penalized Black--
Scholes equation governing European and American Option pricing, Numerische Mathematik, 106, 1--40 (2007).}
\end{thebibliography}
\end{document}